\makeatletter \@addtoreset{equation}{section} \makeatother
\renewcommand\thetable{\thesection.\@arabic\c@table}
\theoremstyle{plain}
\newtheorem{maintheorem}{Theorem}
\newtheorem{maincorollary}{Corollary}
\newtheorem{mainproposition}{Proposition}
\newtheorem{theorem}{Theorem}[section]
\newtheorem{proposition}{Proposition}[section]
\newtheorem{lemma}{Lemma}[section]
\newtheorem{corollary}{Corollary}[section]
\newtheorem{definition}{Definition}[section]
\newtheorem{example}{Example}[section]
\newcommand{\htop}{h_{\topp}}
\newcommand{\al} {\alpha}       
\newcommand{\ga} {\gamma}    
\newcommand{\de} {\delta}       
\newcommand{\vep}{\varepsilon}
\newcommand{\la} {\lambda}      \newcommand{\La}{\Lambda}
\newcommand{\si} {\sigma}       \newcommand{\Si}{\Sigma}
\newcommand{\R}{\mathbb{R}}
\newcommand{\diam}{\operatorname{diam}}
\newcommand{\topp}{\operatorname{top}}
\newcommand{\Leb}{Leb}
\newcommand{\ti}{\tilde }
\newcommand{\cP}{\mathcal{P}}
\newcommand{\cE}{\mathcal{E}}
\newcommand{\cM}{\mathcal{M}}
\newcommand{\cG}{\mathcal{G}}
\newcommand{\cQ}{\mathcal{Q}}
\newcommand{\cF}{\mathcal{F}}
\newcommand{\var}{\text{var}}
\begin{document}

\title{Weak specification properties and large deviations for non-additive potentials}

\author{Paulo Varandas}
\address{Departamento de Matem\'atica, Universidade Federal da Bahia\\
  Av. Ademar de Barros s/n, 40170-110 Salvador, Brazil.}
\email{paulo.varandas@ufba.br \\ \url{http://www.pgmat.ufba.br/varandas/}}

\author{Yun Zhao}
\address{Departament of Mathematics, Soochow University\\
  Suzhou 215006, Jiangsu, P.R. China}
\email{zhaoyun@suda.edu.cn}

\date{\today}

\begin{abstract}
We  obtain large deviation bounds for the measure of deviation
sets associated to asymptotically additive and sub-additive
potentials under some weak specification properties. In particular
a large deviation principle is obtained in the case of uniformly
hyperbolic dynamical systems. Some applications to the study
of the convergence of Lyapunov exponents are given.
\end{abstract}

\keywords{Large deviations, asymptotically additive and
sub-additive potentials, non-additive thermodynamical formalism,
weak Gibbs measure}

 \footnotetext{2000 {\it Mathematics Subject classification}:
 37A30, 37D35, 37H15, 60F10}

\maketitle

\section{Introduction }

The purpose of the theory of large deviations is to study the
rates of convergence of sequences of random variables to some
limit distribution. Some applications of these ideas into the realm of Dynamical Systems
have been particularly useful to estimate the velocity at which
time averages of typical points of ergodic invariant measures
converge to the space average as guaranteed by Birkhoff's ergodic
theorem. More precisely, given a continuous transformation $f$ on
a compact metric space $M$ and a reference measure $\nu$, one
interesting question is to obtain sharp estimates for the
$\nu$-measure of the deviation sets $ \{
    x \in M : \frac{1}{n} \sum_{j=0}^{n-1} g(f^j(x)) > c
\} $ for all continuous functions $g: M \to \R$ and real numbers
$c$. We refer the reader to \cite{You90,Ki90,KN91,EKW94, PS05,
AP06,MN08,RY08, CoRi08, Yu07, Mel09, PS09, Co09, Chu11,Va12} and the
references therein for an account on recent large deviations
results.

Since many relevant quantities in dynamical systems arise from non-additive sequences,
e.g. the largest Lyapunov exponent for higher dimensional dynamical systems,  Kingman's ergodic theorem
becomes in many situations crucial to study the deviation sets
$
\{ x \in M : \varphi_n(x) > c n \}
$
with respect to some not necessarily additive sequence $\Phi=\{\varphi_n\}_n$ of continuous functions.
Inspired by the pioneering work of Young~\cite{You90} our purpose
in this direction is to provide sharp large deviations estimates
for a wide class of non-additive sequences of continuous
potentials. Our approach uses ideas from the non-additive
thermodynamical formalism and we estimate the measure of deviation
sets in the case that the reference measure satisfies a weak Gibbs
property. Some recent results considering the thermodynamical
formalism of almost additive or sub-additive sequences of
potentials include \cite{FL02, Ba06,Mu06, IY11, FK12}: in all
cases the authors proved that there exists a unique equilibrium
state $\mu_\Phi$ and it is absolutely continuous with respect to a
Gibbs measure $\nu_\Phi$ with density bounded away from zero and
infinity. Building over~\cite{ba96}, M\'eson and
Vericat~\cite{MV09} obtained bounds for large deviations processes
for a family of non-additive potentials $\Phi=\{\varphi_n\}$,
namely those such that $\varphi_n-\varphi_{n-1}\circ f$ converge
uniformly. Our purpose here is to extend the theory beyond the
uniformly hyperbolic context and to consider a broad class of
sub-additive, almost additive and asymptotically additive
sequences of potentials. For simplicity, we will refer to the
previous classes of potentials as non-additive sequences of
potentials.

Let us mention that, since we consider some non-uniformly hyperbolic
dynamical systems or dynamical systems that admit mistakes these in general do not satisfy
the usual specification property. In fact, if on the one hand the specification property holds robustly in the uniformly
hyperbolic setting~\cite{SSY09}, on the other hand dynamical systems with the specification property are rare even
among partially hyperbolic dynamical systems in dimension three~\cite{SVY}. For that reason we assume the
dynamical system to satisfy some weaker specification property.
In fact, as a physical process evolves it is natural for the
evolving process to change or produce some errors in the
evaluation of orbits. However, a self-adaptable system should
decrease errors over time. This is a  motivation for our study of
the large deviations for non-additive potentials when the systems
admits mistakes. In fact, 
%
not only the notions of specification and topologically mixing
coincide for every one-dimensional \emph{continuous} mapping (see
\cite{Bl83}) as weaker specification properties hold in the
presence of nonuniform hyperbolicity.

Roughly, one proves that the measure of the set of points whose
sequences of values remain far from the space average with respect
to the equilibrium measure decrease exponentially fast. In
particular we obtain a large deviations principle for non-additive
sequences in the uniformly expanding setting.
As important applications we estimate the rate of convergence of the
maximal Lyapunov exponent for some open families of linear cocycles and
and non-conformal expanding maps. We refer the reader to Section 4 for precise
statements and details.

The remainder of this paper is organized as follows. In section 2,
we present some definitions and fundamental notions necessary to
state our results. In Section 3 we present statements of the main
results in this paper. Some examples are given in Section 4 while
the proofs of the main results are given in section 5. Finally in
the Appendix~A we estimate the measure of mistake dynamical balls
in the uniformly expanding setting.

\section{Preliminaries}\label{s.preliminaries}

Throughout this paper,  $(M,f)$ denotes a continuous
 dynamical systems in the sense that $f:M\rightarrow M$ is a (piecewise) continuous
 transformation on the compact metric space $M$ with a metric $d $.  Invariant Borel
probability measures are associated with $(M,f)$. Let
$\mathcal{M}_f$ and $\mathcal{E}_f$  denote  the space of
$f$-invariant Borel probability measures and the set of
$f$-invariant ergodic Borel probability measures, respectively.

\subsection{Specification properties }\label{subsec.specification}

Specification properties are very useful to obtain existence of equilibrium states as well as
large deviation principles. Here we introduce and discuss some different notions.

\begin{definition}
\label{d.strong.specification}
We say that a map $f$ satisfies the \emph{specification property} if
for any $\vep>0$ there exists an integer $N=N(\vep)\geq 1$ such that
the following holds: for every $k\geq 1$, any points $x_1,\dots,
x_k$, and any sequence of positive integers $n_1, \dots, n_k$ and
$p_1, \dots, p_k$ with $p_i \geq N(\vep)$
there exists a point $x$ in $M$ such that
$$
\begin{array}{cc}
d\Big(f^j(x),f^j(x_1)\Big) \leq \vep, &\forall \,0\leq j \leq n_1
\end{array}
$$
and
$$
\begin{array}{cc}
d\Big(f^{j+n_1+p_1+\dots +n_{i-1}+p_{i-1}}(x) \;,\; f^j(x_i)\Big)
        \leq \vep &
\end{array}
$$
for every $2\leq i\leq k$ and $0\leq j\leq n_i$.
\end{definition}

The previous notion is  slightly weaker than the one introduced by Bowen~\cite{Bo71},
that requires that any finite sequence of pieces of orbit is well approximated by periodic orbits.
Although robust specification property for diffeomorphisms is satisfied only by uniformly hyperbolic
dynamical systems (see Sakai, Sumi and Yamamoto~\cite{SSY09}) we know by Blokh~\cite{Bl83}
that the notions of specification and topologically mixing coincide for
every one-dimensional \emph{continuous} mapping.
This is no longer true if the one-dimensional map fails to be continuous (see e.g.~\cite{Buz97}).

To define other weak form of specification we first recall the
definitions of mistake function and mistake dynamical balls  which
are due to Thompson \cite{Th10}, Pfister and Sullivan \cite{ps}.
Given $\vep_0>0$, the function $g:\mathbb{N}\times
(0,\vep_0]\rightarrow \mathbb{N}$ is called a \emph{mistake
function} if for all $\vep\in (0,\vep_0]$ and all $n\in
\mathbb{N}$, $g(n,\vep)\leq g(n+1,\vep)$ and $\lim_{n}
g(n,\vep)/n=0$. By a slight abuse of notation, we set
$g(n,\vep)=g(n,\vep_0)$ for every $\vep>\vep_0$.
Moreover, for any subset of integers $\Lambda\subset [0,N]$, we
will use the family of distances in the metric space $M$ given by
$d_\Lambda(x,y)=\max \{d(f^ix,f^iy):i\in\Lambda\}$ and consider
the balls $B_{\Lambda}(x,\vep)=\{y\in M:d_\Lambda(x,y)<\vep\}$.
Hence we can now consider mistake dynamical balls.
Given a mistake function $g$, $\vep>0$ and $n\ge 1$, the
\emph{$(n,\varepsilon)-$mistake dynamical ball} $B_n(g;x,\vep)$ of
radius $\vep$ and length $n$ associated to $g$ is defined by
\begin{eqnarray*}
B_n(g;x,\vep)&=&\{ y\in M\mid y\in B_{\Lambda}(x,\vep)~\hbox{for
some}~\Lambda\in I(g;n,\vep)\}\\
&=&\bigcup_{\Lambda\in I(g;n,\vep)}B_{\Lambda}(x,\vep)
\end{eqnarray*}
where $I(g;n,\vep)=\{ \Lambda\subset [0,n-1]\cap\mathbb{N}\mid \#
\Lambda \geq n-g(n,\vep)\}$. A set $F\subset Z$ is
$(g;n,\varepsilon)-$separated for $Z$ if for every $x,y\in F$ with
$x\neq y$ implies $d_\Lambda(x,y)>\varepsilon,\forall \Lambda\in
I(g;n,\varepsilon) $. The dual definition is as follows.  A set
$E\subset Z$ is $(g;n,\varepsilon)-$spanning for $Z$ if for all
$z\in Z$, there exists $x\in E$ and $\Lambda\in I(g;n,\varepsilon)
$ such that $d_\Lambda(x,z)\leq \varepsilon$.

\begin{definition}\label{d.almost.specification}
Let $g$ be a mistake function. We say that $f$ satisfies the
\emph{$g$-almost specification property} if there exists $\vep>0$
and a positive integer $N(g,\vep)$ such that the following
property holds: for every $k\geq 1$, any points $x_1,\dots, x_k$,
and any positive integers $n_1, \dots, n_k$ with $n_i \geq
N(g,\vep)$ it follows that
$$
\bigcap_{i=1}^{k} f^{-\sum_{j=0}^{i-1} n_j} (B_{n_i}(g; x_i,\vep))
    \neq \emptyset
$$
where $n_0=0$.
\end{definition}

The later property holds for all $\beta$-transformations (see \cite{ps,Th10}). In fact
Thompson~\cite{Th10} introduced a more general property  where the value $\vep$ is replaced by
several values $\vep_1,\dots, \vep_k$. However this weaker requirement is suitable for our purposes.

\subsection{Non-additive potentials}

 Let $C(M)$ denote the space of continuous
functions from $M$ to $\mathbb R$. A sequence
$\Phi=\{\varphi_n\}\subset C(M) $  is a \emph{sub-additive}
(respectively superadditive) sequence of potentials if
$\varphi_{m+n}\le \varphi_{m}+\varphi_{n}\circ f^m$ (respectively
$\varphi_{m+n}\ge \varphi_{m}+\varphi_{n}\circ f^m$) for every
$m,n\ge 1$.

We say that the sequence  $\Phi=\{\varphi_n\}\subset C(M)$ is an
\emph{almost additive} sequence of potentials, if there exists a
uniform constant $C>0$ such that $\varphi_{m}+\varphi_{n}\circ
f^m-C \le \varphi_{m+n}\le \varphi_{m}+\varphi_{n}\circ f^m+C$ for
every $m,n\ge 1$. Finally, we say that $\Phi=\{\varphi_n\}\subset
C(M)$ is an \emph{asymptotically additive} sequence of potentials,
if for any $\xi>0$ there exists a continuous function
$\varphi_\xi$ such that
\begin{equation}\label{eq.asymptotic}
\limsup_{n\to\infty} \frac1n \left\| \varphi_n - S_n \varphi_\xi
\right\| <\xi
\end{equation}
where $S_n\varphi_\xi=\sum_{j=0}^{n-1}\varphi_\xi\circ f^j$
denotes the usual Birkhoff sum, and $||\cdot||$ is the sup norm on
the Banach space $C(M)$. Let
$\mathcal A$ denote the set of asymptotically additive potentials.
The following result establishes the relation between these notions.

\begin{proposition}\label{prop:relations}
The following properties hold:
\begin{enumerate}
\item If $\Phi=\{\varphi_n\}$ is almost additive  there exists
$C>0$ such that the sequence
    $\Phi_C=\{\varphi_n+C\}$ is sub-additive and $\Phi_{-C}=\{\varphi_n-C\}$ is superadditive;
\item If $\Phi=\{\varphi_n\}$ is almost additive then it is
asymptotically additive, and for any  $\xi>0$
    there exists $k=k(\xi)\ge 1$ so that
    $
    \limsup_{n\to\infty} \frac1n \big\| \varphi_n - S_n \big( \frac1k \varphi_k \big)
    \big\|<\xi.
    $
\end{enumerate}
\end{proposition}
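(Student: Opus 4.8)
The plan is to derive both statements directly from the two-sided defining inequality of almost additivity, the only genuine work being bookkeeping with the constant $C$ together with one telescoping identity.

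For part (1) I would merely rearrange the definition. If $\Phi=\{\varphi_n\}$ is almost additive with constant $C$, set $\psi_n=\varphi_n+C$; then the upper estimate $\varphi_{m+n}\le\varphi_m+\varphi_n\circ f^m+C$ reads $\psi_{m+n}-C\le(\psi_m-C)+(\psi_n-C)\circ f^m+C$, i.e. $\psi_{m+n}\le\psi_m+\psi_n\circ f^m$, so $\Phi_C=\{\varphi_n+C\}$ is sub-additive; symmetrically the lower estimate $\varphi_{m+n}\ge\varphi_m+\varphi_n\circ f^m-C$ gives that $\Phi_{-C}=\{\varphi_n-C\}$ is superadditive. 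Thus the very constant $C$ furnished by the definition works and nothing further is needed.

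For part (2), fix $\xi>0$ and choose any integer $k>C/\xi$; I claim $\varphi_\xi:=\frac1k\varphi_k$ does the job, which at once yields the displayed inequality and, since $\frac1k\varphi_k\in C(M)$, asymptotic additivity. The key point is that, applying the definition to the pair $(j,k)$ and using the harmless convention $\varphi_0\equiv0$, one has the pointwise bound $\bigl|\varphi_k\circ f^j-(\varphi_{j+k}-\varphi_j)\bigr|\le C$ for all $j\ge0$. Summing over $0\le j\le n-1$ and telescoping the sum $\sum_{j=0}^{n-1}(\varphi_{j+k}-\varphi_j)=\sum_{l=0}^{k-1}\varphi_{n+l}-\sum_{l=0}^{k-1}\varphi_l$ (valid for $n\ge k$), one gets
\[
\sum_{j=0}^{n-1}\varphi_k\circ f^j=\sum_{l=0}^{k-1}\varphi_{n+l}-\sum_{l=0}^{k-1}\varphi_l+E_n,\qquad \|E_n\|\le nC ,
\]
hence $S_n\bigl(\frac1k\varphi_k\bigr)=\frac1k\sum_{l=0}^{k-1}\varphi_{n+l}-\frac1k\sum_{l=0}^{k-1}\varphi_l+\frac1k E_n$. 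I would then control the ``boundary block'' by invoking almost additivity once more, now for the pair $(n,l)$: from $\|\varphi_{n+l}-\varphi_n-\varphi_l\circ f^n\|\le C$ one obtains $\bigl\|\frac1k\sum_{l=0}^{k-1}\varphi_{n+l}-\varphi_n\bigr\|\le C+M_k$ with $M_k:=\max_{0\le l<k}\|\varphi_l\|$, while trivially $\bigl\|\frac1k\sum_{l=0}^{k-1}\varphi_l\bigr\|\le M_k$. Collecting the pieces gives
\[
\Bigl\|\varphi_n-S_n\bigl(\tfrac1k\varphi_k\bigr)\Bigr\|\le \frac{nC}{k}+2M_k+C ,
\]
so dividing by $n$ and letting $n\to\infty$ eliminates the $k$-dependent but $n$-independent part and leaves $\limsup_n\frac1n\|\varphi_n-S_n(\frac1k\varphi_k)\|\le C/k<\xi$.

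I do not anticipate a serious obstacle. The only delicate points are the minor bookkeeping of the telescoping sum in the range $n<k$ (irrelevant once $n\to\infty$) and keeping the term $nC/k$ — the single contribution that survives division by $n$ — clearly separated from the purely $k$-dependent constants $M_k$; it is precisely this term that forces one to take $k$ large and explains why the conclusion is a $\limsup$ bound rather than actual convergence.
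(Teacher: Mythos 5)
Your proof is correct. The paper itself does not prove Proposition~\ref{prop:relations}: it declares part~(1) ``obvious from the definitions'' and defers part~(2) to Proposition~A.5 of~\cite{FH10} (equivalently Proposition~2.1 of~\cite{ZZC11}). Your argument supplies what the paper omits, and it is the natural one. Part~(1) is exactly the one-line rearrangement the paper has in mind. For part~(2), your derivation via $\|\varphi_k\circ f^j-(\varphi_{j+k}-\varphi_j)\|\le C$, the telescoping identity
\[
\sum_{j=0}^{n-1}(\varphi_{j+k}-\varphi_j)=\sum_{l=0}^{k-1}\varphi_{n+l}-\sum_{l=0}^{k-1}\varphi_l\qquad(n\ge k),
\]
and the boundary estimate $\bigl\|\frac1k\sum_{l=0}^{k-1}\varphi_{n+l}-\varphi_n\bigr\|\le C+M_k$ gives the clean bound $\limsup_n\frac1n\|\varphi_n-S_n(\frac1k\varphi_k)\|\le C/k$, which is precisely the quantitative form stated and is essentially the proof of Feng--Huang. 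One small point worth flagging: you invoke the convention $\varphi_0\equiv 0$ to include the indices $j=0$ and $l=0$, where the almost-additivity inequality (stated only for $m,n\ge 1$) does not directly apply; the $j=0$ and $l=0$ estimates are then trivially $0\le C$, so the convention is indeed harmless, but it deserves a sentence. All constants check out: $2M_k+C$ is $n$-independent and vanishes after dividing by $n$, leaving only the $C/k$ term that motivates taking $k>C/\xi$.
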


\begin{proof}
Part (1) is obvious from the definitions. Part (2) is contained in
Proposition A.5 of \cite{FH10} or Proposition 2.1 of \cite{ZZC11}.
\end{proof}

 Example \ref{bg:almostadditive}  provides examples of sub-additive and
superadditive potentials that are almost additive, while in
Example \ref{acr} we exhibit some sub-additive and superadditive
potentials that are asymptotically additive.
By Kingman's subadditive ergodic theorem it follows that for every
sub-additive potential $\Phi=\{\varphi_n\}$ and every
$f$-invariant ergodic probability measure $\mu$ it holds
\begin{equation}\label{eq.Kingman}
\lim_{n\to\infty} \frac1n \varphi_n(x)
    =\inf_{n\geq 1} \frac1n \int \varphi_n \;d\mu=: \cF_*(\mu,\Phi),
    \quad \text{ for $\mu$-a.e. $x$}.
\end{equation}
In fact, Feng and Huang~\cite{FH10} proved that the same property
holds for asymptotically additive potentials and, consequently,
for all almost additive ones. In that paper, Feng and Huang also
proved that the map $\mu\mapsto \cF_*(\mu,\Phi)$ is continuous
(respectively upper semi-continuous) if $\Phi$ is asymptotically
additive (respectively sub-additive).

\subsection{Non-additive topological pressure and equilibrium states}

We recall the notions of non-additive topological
pressure.  For any $\vep>0$ and $n\in\mathbb N$ we consider the
$(n,\varepsilon)-$dynamical balls $B_n(x,\vep):=\{y\in M: d_n(x,
y)<\vep \}$, where $d_n(x,y)=\max_{0\leq i<n}d(f^ix, f^iy)$.
We say that a set $E\subset M$ is $(n,\vep)$-\emph{separated} if
all distinct $x,y\in E$ satisfy $y\notin B_n(x,\vep)$. If
$\Phi=\{\varphi_n\}$ is a non-additive (namely sub-additive,
almost additive or asymptotically additive) family of potentials, the
\emph{topological pressure of $f$ with respect to $\Phi$}  is
defined by
\[
P(f,\Phi)=\lim_{\vep\rightarrow 0}\limsup_{n\rightarrow
\infty}\frac{1}{n}\log \sup_{E_n}\{Z_n(\Phi, E_n,\vep)\}
\]
where $Z_n(\Phi, E_n,\vep)=\sum_{y\in E_n}\exp(\varphi_n(y))$ and
the supremum is taken over all $(n,\vep)$-separated sets. The
following variational principle relates the non-additive pressure
with the natural modifications of the measure-theoretic free
energy. Recall first a very useful formula to compute the
metric entropy due to Katok~\cite[Theorem I.I]{Ka80}:
if  $\eta$ is an $f$-invariant ergodic probability measure
then
\begin{equation}\label{eq.Katok}
h_\eta(f)= \lim_{\vep \to 0} \limsup_{n\to \infty}
        \frac1n \log N(n,\vep,\de)
        = \lim_{\vep \to 0} \liminf_{n\to \infty}
        \frac1n \log N(n,\vep,\de),
\end{equation}
where $N(n,\vep,\delta)$ is the minimum number of
$(n,\vep)$-dynamical balls necessary to cover a set of
$\eta-$measure larger than $\de$. More generally, it was proven in
\cite{Th10}
 that for any mistake function $g$ under the previous assumptions
it also holds  that
\begin{equation}\label{eq.mistKatok}
h_\eta(f)= \lim_{\vep \to 0} \limsup_{n\to \infty}
        \frac1n \log N(g; n,\vep,\de)
        = \lim_{\vep \to 0} \liminf_{n\to \infty}
        \frac1n \log N(g; n,\vep,\de),
\end{equation}
where the term $N(g;n,\vep,\de)$ stands for the minimum number of
$( n,\vep)$-mistake dynamical balls necessary to cover a set of
$\eta-$measure larger than $\delta$. A generalization of formulas
(\ref{eq.Katok}) and (\ref{eq.mistKatok}) to measure theoretic
pressure was given in \cite{czc12} and \cite{chz}.

\begin{theorem}\label{thm:variational.principle} Let $f:M\rightarrow M$ be a continuous map on the
compact metric space $M$, and $\Phi=\{\varphi_n\}$ a non-additive
potential (namely sub-additive, almost additive or
asymptotically additive). Then
$$
P(f,\Phi)= \sup \{h_{\mu}(f)+\mathcal{F}_*(\mu,\Phi):\mu\in
\mathcal{M}_f,\  \mathcal{F}_*(\mu,\Phi)\neq -\infty\}.
$$
\end{theorem}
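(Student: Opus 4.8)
The plan is to establish the variational principle for non-additive potentials by reducing the three cases (sub-additive, almost additive, asymptotically additive) to the additive one via approximation, combined with the classical Walters--Misiurewicz variational principle for Birkhoff sums. The starting observation is that by Proposition~\ref{prop:relations}, it suffices to treat the sub-additive and asymptotically additive cases, since almost additive potentials are simultaneously asymptotically additive (and one may also sandwich them between a sub-additive and a superadditive sequence). I would therefore organize the argument around two inequalities --- $P(f,\Phi)\le \sup\{h_\mu(f)+\cF_*(\mu,\Phi)\}$ and the reverse --- and handle each by passing to suitable approximating additive potentials $S_n\varphi_\xi$.

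For the inequality $P(f,\Phi)\ge \sup\{h_\mu(f)+\cF_*(\mu,\Phi)\}$, fix an ergodic $\mu$ with $\cF_*(\mu,\Phi)>-\infty$. In the sub-additive case, for fixed $k$ the function $\frac1k\varphi_k$ is a genuine continuous potential and by sub-additivity $\varphi_{n}\le S_{n}^{(k)}\varphi_k$ up to a bounded error coming from the remainder of $n$ modulo $k$; more precisely $\frac1n\varphi_n \to \inf_m\frac1m\int\varphi_m\,d\mu = \cF_*(\mu,\Phi)$ by \eqref{eq.Kingman}, and $\frac1n S_n(\frac1k\varphi_k)\to \frac1k\int\varphi_k\,d\mu$ by Birkhoff. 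Using the Katok entropy formula \eqref{eq.Katok} one selects, for small $\vep$ and large $n$, an $(n,\vep)$-separated set $E_n$ inside a set of $\mu$-measure close to $1$ on which $\varphi_n(x)\ge (\cF_*(\mu,\Phi)-\xi)n$ and $\#E_n\ge e^{(h_\mu(f)-\xi)n}$; then $Z_n(\Phi,E_n,\vep)\ge e^{(h_\mu(f)+\cF_*(\mu,\Phi)-2\xi)n}$, giving the bound after letting $n\to\infty$, $\vep\to0$, $\xi\to0$. In the asymptotically additive case one runs the same argument with $\varphi_\xi$ in place of $\frac1k\varphi_k$, using \eqref{eq.asymptotic} to control $\frac1n\|\varphi_n-S_n\varphi_\xi\|$.

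For the reverse inequality $P(f,\Phi)\le \sup\{h_\mu(f)+\cF_*(\mu,\Phi)\}$, the cleanest route is again approximation. Given $\xi>0$ pick $\varphi_\xi$ (asymptotically additive case) or $\frac1k\varphi_k$ (sub-additive case, where sub-additivity gives $\varphi_n\le S_n(\frac1k\varphi_k)+o(n)$ uniformly, so $P(f,\Phi)\le P(f,S_\bullet(\frac1k\varphi_k)) = P(f,\frac1k\varphi_k)$, the ordinary topological pressure of the continuous function $\frac1k\varphi_k$). Apply the classical variational principle to get an invariant $\mu$ --- which may be taken ergodic by the ergodic decomposition and affinity of both $h_\mu(f)$ and $\mu\mapsto\int\frac1k\varphi_k\,d\mu$ --- with $h_\mu(f)+\frac1k\int\varphi_k\,d\mu$ close to $P(f,\frac1k\varphi_k)$. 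Since $\cF_*(\mu,\Phi)=\inf_m\frac1m\int\varphi_m\,d\mu\le\frac1k\int\varphi_k\,d\mu$ for sub-additive $\Phi$, this potential replacement only increases the free energy, so $P(f,\Phi)\le h_\mu(f)+\cF_*(\mu,\Phi)+(\text{error})$; let $\xi\to0$. For asymptotically additive $\Phi$ one instead uses $|\cF_*(\mu,\Phi)-\int\varphi_\xi\,d\mu|<\xi$, which follows from \eqref{eq.asymptotic} and the Feng--Huang convergence, so the same ergodic choice from the classical variational principle works with error $O(\xi)$.

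The main obstacle I anticipate is bookkeeping the error terms uniformly in $n$ when converting between $\varphi_n$ and the Birkhoff sums $S_n\varphi_\xi$ (or $S_n(\frac1k\varphi_k)$): in the asymptotically additive case \eqref{eq.asymptotic} is only a $\limsup$ statement, so one must be careful that the exceptional set of $n$ is harmless, and in the sub-additive case the one-sided control $\varphi_n\le S_n(\frac1k\varphi_k)+o(n)$ must be paired with the matching lower bound along a subsequence (or via \eqref{eq.Kingman} at the measure level) rather than pointwise. The subtlety that $\cF_*(\mu,\Phi)$ is only upper semi-continuous in the sub-additive case (hence the supremum need not be attained) is sidestepped because we only ever compare pressures to free energies of specific measures and then take limits; no compactness of the optimizing set is needed. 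Essentially this is the argument in Cao--Feng--Huang and in Feng--Huang adapted to the stated generality, so I would cite those for the technical approximation lemmas and present the reduction explicitly.
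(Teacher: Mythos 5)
The paper itself does not prove this theorem; it simply cites \cite{CFH08,Ba06,Mu06,FH10}, so your write-up is being compared against those references rather than an in-text argument. Your treatment of the \emph{lower} bound $P(f,\Phi)\ge h_\mu(f)+\cF_*(\mu,\Phi)$ via Katok's formula and Kingman/Birkhoff is sound for all three classes, and your reduction of the \emph{upper} bound to the classical variational principle is correct in the asymptotically additive (hence almost additive) case: there the two-sided uniform bound $\|\varphi_n-S_n\varphi_\xi\|\le \xi n$ gives $|\cF_*(\mu,\Phi)-\int\varphi_\xi\,d\mu|\le\xi$ \emph{uniformly in $\mu$}, so the error passes through the supremum cleanly.

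The gap is in the sub-additive upper bound. There the approximation is one-sided and not uniformly tight: you only have $\cF_*(\mu,\Phi)=\inf_m\frac1m\int\varphi_m\,d\mu\le\frac1k\int\varphi_k\,d\mu$, with no control on the gap that is uniform over $\mu$. Your chain of inequalities is therefore reversed at the crucial step: from $P(f,\Phi)\le P(f,\tfrac1k\varphi_k)\approx h_\mu(f)+\tfrac1k\int\varphi_k\,d\mu$ and $\cF_*(\mu,\Phi)\le\tfrac1k\int\varphi_k\,d\mu$ you cannot deduce $P(f,\Phi)\le h_\mu(f)+\cF_*(\mu,\Phi)+\text{error}$; the second inequality makes the right-hand side \emph{smaller}, not larger. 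In effect you are asserting $\inf_k\sup_\mu\{h_\mu(f)+\tfrac1k\int\varphi_k\,d\mu\}\le\sup_\mu\inf_k\{h_\mu(f)+\tfrac1k\int\varphi_k\,d\mu\}$, which is the nontrivial direction of a minimax inequality and does not hold automatically here (the index set $\{1,2,\dots\}$ is not convex, the near-maximizer $\mu=\mu_k$ moves with $k$, and $\frac1k\int\varphi_k\,d\mu_k$ need not converge to $\cF_*(\mu_\infty,\Phi)$ along a weak$^*$ limit). This is exactly why Cao--Feng--Huang do \emph{not} prove the sub-additive upper bound by reduction to the additive case: they run a Misiurewicz-type argument, building $\mu_n=\sum_{x\in E_n}\frac{e^{\varphi_n(x)}}{Z_n}\cdot\frac1n\sum_{i<n}\delta_{f^ix}$ from maximal $(n,\vep)$-separated sets and using sub-additivity to show that any weak$^*$ limit $\mu$ satisfies $h_\mu(f)+\cF_*(\mu,\Phi)\ge\limsup\frac1n\log Z_n$. (Indeed, this is the mechanism the paper itself invokes in the proof of Theorem~\ref{deviations}(2).) To repair your write-up you should either restrict the approximation route to the almost-additive/asymptotically-additive cases, or replace the sub-additive upper bound with the CFH construction.
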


We refer the reader to \cite{CFH08,Ba06,Mu06,FH10} for the proof of this variational principle and details
on topological pressure of non-additive potentials. An $f$-invariant probability measure $\mu$ that
attains the supremum is called \emph{equilibrium state} for $f$ with respect to $\Phi$. In many situations
these arise as invariant measures absolutely continuous with respect to weak Gibbs measures.

\begin{definition}\label{def:weak.Gibbs}
Given a sequence of functions $\Phi=\{\varphi_n\}$, we say that a
probability measure $\nu$ is a \emph{weak Gibbs measure with
respect to $\Phi$ on $\Lambda\subset M$}, if the set $\Lambda$ has
full $\nu$-measure  and there exists $\varepsilon_0>0$  such that
for every $x \in \La$ and $0<\vep<\varepsilon_0$ there exists a
sequence of positive constants $(K_n)_{n\geq1}$ (depending only on
$\varepsilon$) satisfying $\lim\limits_{n\to\infty} \frac{1}{n}
\log K_n=0$ and for every $n\ge 1$
\begin{equation*}
K_{n}^{-1}
        \leq \frac{\nu(B_n(x,\vep))}{e^{-n P(f,\Phi)+\varphi_n(x)}} \leq
K_n.
\end{equation*}
We say that $\nu$ is a \emph{Gibbs measure with respect to
$\Phi$}, if there exists $K>0$ such that the same property holds
with $K_n=K$ independent of $n$.
\end{definition}
The previous notion of Gibbs measure is a generalization of the usual one
obtained in \cite{Ba06,Mu06} in the uniformly hyperbolic
setting. In the case of additive potentials,  these weak Gibbs
measures appear in dynamics with some non-uniform hyperbolicity as
e.g. \cite{VV10,Yu00}. Let us focus on some results concerning the existence
of equilibrium states in the uniformly hyperbolic setting.
Given a basic set $\Omega$ for an Axiom A diffeomorphism $f$, it
is known that every almost additive potential $\Phi=\{\varphi_n\}$
satisfying
\begin{enumerate}
\item (bounded variation)
    $\exists A,\delta>0: \;\; \sup_{n\in\mathbb N} \ga_n(\Phi,\de) \le A,$
\end{enumerate}
where $\gamma_n(\Phi, \delta)=\sup\{|\varphi_n(y)-\varphi_n(z)|:\
y,z \in B_n(x,\delta)\}$, admits a unique equilibrium state
$\mu_\Phi$ which coincides with the Gibbs measure w.r.t. $\Phi$
(see ~\cite{Ba06,Mu06} for the proof). This condition above was
introduced by Bowen \cite{Bow74} to obtain uniqueness of
equilibrium states for expansive maps with the specification
property. We say that a sequence of continuous functions
$\Psi=\{\psi_n\}$ satisfy \emph{weak Bowen condition}, if there
exist $\delta>0$ and a sequence of positive real numbers
$\{a_n\}_n$ such that
$\limsup\limits_{n\rightarrow\infty}\frac{a_n}{n}=0$ and
\begin{equation}
\ga_n(\Phi,\de)
    \leq a_n, \quad \text{ for all $n\ge 1$}.
\end{equation}

\section{Statement of the results }\label{s.statements}

Here we state our main results of this paper. The first one is a modified Brin-Katok local entropy formula
for dynamical systems when some errors are admissible. We prove that the exponential decreasing rate
of the measure of the mistake dynamical ball is equal to the measure-theoretic entropy.

\begin{mainproposition}\label{thm:BK}
Given an $f$-invariant ergodic measure $\mu$ and a mistake
function $g$, the following limits
\begin{eqnarray*}
\underline{h}_{\mu}(g;f,x)=\lim_{\varepsilon\to 0}\liminf_{n\to
\infty}-\frac{1}{n} \log \mu(B_n(g;x,\varepsilon))\end{eqnarray*}
and
\begin{eqnarray*}
    \overline{h}_{\mu}(g;f,x)=\lim_{\varepsilon\to 0}\limsup_{n\to\infty}-\frac{1}{n}\log \mu(B_n(g;x,\varepsilon))
\end{eqnarray*}
exist for $\mu$-almost every $x$ and coincide with the measure
theoretic entropy $h_{\mu}(f)$.
\end{mainproposition}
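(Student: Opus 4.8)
The plan is to sandwich the mistake local entropy between the classical Brin--Katok local entropy on one side and the Katok-type entropy formula \eqref{eq.mistKatok} on the other. First I would record the trivial inclusion $B_n(x,\vep)\subset B_n(g;x,\vep)$, which holds because $\Lambda=[0,n-1]\in I(g;n,\vep)$; hence $\mu(B_n(g;x,\vep))\ge \mu(B_n(x,\vep))$ and therefore
$$
\overline h_\mu(g;f,x)\le \overline h_\mu(f,x),\qquad \underline h_\mu(g;f,x)\le \underline h_\mu(f,x),
$$
where the right-hand sides are the usual Brin--Katok quantities. By the Brin--Katok theorem these coincide with $h_\mu(f)$ for $\mu$-a.e.\ $x$ when $\mu$ is ergodic. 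This already gives the upper bound $\overline h_\mu(g;f,x)\le h_\mu(f)$ a.e., and it shows that both limits in $\vep$ exist as monotone limits (the integrands are monotone in $\vep$), so the only real content is the reverse inequality $\underline h_\mu(g;f,x)\ge h_\mu(f)$ a.e.

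For the lower bound I would argue by a covering/counting estimate. Fix $\vep>0$ and $\de\in(0,1)$. Suppose, aiming for a contradiction, that on a set $A$ of positive $\mu$-measure one has $\liminf_n -\tfrac1n\log\mu(B_n(g;x,\vep))< h_\mu(f)-\eta$ for some $\eta>0$; passing to a smaller positive-measure set and using Egorov, one finds, for infinitely many $n$, a subset $A_n\subset A$ with $\mu(A_n)$ bounded below by a fixed constant and $\mu(B_n(g;x,\vep))\ge e^{-n(h_\mu(f)-\eta/2)}$ for all $x\in A_n$. Now extract from $A_n$ a maximal $(g;n,\vep)$-separated set $F_n\subset A_n$; the dual $(g;n,\vep)$-spanning property makes $\{B_n(g;x,\vep): x\in F_n\}$ cover $A_n$ — here one must check that a $(g;n,\vep)$-separated set that is maximal in $A_n$ is $(g;n,2\vep)$-spanning for $A_n$, the standard doubling argument adapted to the family $I(g;n,\vep)$. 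A Vitali/Besicovitch-type bounded-overlap argument, or simply the crude bound $\#F_n\cdot\min_{x\in F_n}\mu(B_n(g;x,\vep))\le C\,\mu(A_n\text{-neighborhood})\le C$, then forces $\#F_n\le C\,e^{n(h_\mu(f)-\eta/2)}$. But a cover of $A_n$ (a set of measure bounded below) by $\#F_n$ mistake dynamical balls of this size contradicts \eqref{eq.mistKatok}, which says the minimal number $N(g;n,\vep,\de)$ of such balls grows like $e^{nh_\mu(f)}$ as $\vep\to0$; choosing $\vep$ small enough makes the exponential rate from \eqref{eq.mistKatok} exceed $h_\mu(f)-\eta/2$, a contradiction. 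This yields $\underline h_\mu(g;f,x)\ge h_\mu(f)$ for $\mu$-a.e.\ $x$.

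The main obstacle I anticipate is the bounded-overlap step: unlike ordinary dynamical balls, a mistake ball $B_n(g;x,\vep)=\bigcup_{\Lambda\in I(g;n,\vep)}B_\Lambda(x,\vep)$ is a union over a combinatorially large index set $I(g;n,\vep)$ (of cardinality up to $\binom{n}{g(n,\vep)}$, which is only subexponential because $g(n,\vep)/n\to0$), and the separation condition for $F_n$ only requires $d_\Lambda(x,y)>\vep$ for \emph{every} admissible $\Lambda$, which is a weak separation. So one cannot directly invoke a metric Besicovitch lemma. The fix is to replace the geometric covering argument by the purely measure-theoretic estimate: from $\mu(B_n(g;x,\vep))\ge e^{-n(h_\mu(f)-\eta/2)}$ for $x\in F_n$ and the fact that an $(g;n,\vep)$-separated set can be chosen so that the balls $B_n(g;x,\vep/2)$ are pairwise disjoint (two distinct points of an $(g;n,\vep)$-separated set cannot lie in a common $B_{\Lambda}(z,\vep/2)$ once one checks the triangle inequality on each admissible window), one gets $\#F_n\,e^{-n(h_\mu(f)-\eta/2)}\le \sum_{x\in F_n}\mu(B_n(g;x,\vep/2))\le 1$, hence $\#F_n\le e^{n(h_\mu(f)-\eta/2)}$; since $F_n$ is also $(g;n,2\vep)$-spanning for $A_n$, it furnishes a cover of $A_n$ by at most $e^{n(h_\mu(f)-\eta/2)}$ mistake balls of radius $2\vep$, contradicting \eqref{eq.mistKatok} for $2\vep$ small. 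Finally, the existence of the limits in $\vep$ is automatic from monotonicity, and the a.e.\ equality $\underline h_\mu(g;f,x)=\overline h_\mu(g;f,x)=h_\mu(f)$ follows by combining the two bounds.
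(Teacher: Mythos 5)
Your approach is genuinely different from the paper's. Both start with the same observation that $B_n(x,\vep)\subset B_n(g;x,\vep)$ and Brin--Katok, which gives the easy inequality. For the hard inequality $\underline{h}_\mu(g;f,x)\ge h_\mu(f)$, however, the paper does not argue by contradiction via the Katok-type formula \eqref{eq.mistKatok}; it instead invokes Lemma~\ref{yl32} (from \cite{rvz}): for $\mu$-generic $x$ (those whose orbit visits the $\vep$-neighborhood of $\partial\cQ$ with frequency $<\gamma$), the mistake ball $B_n(g;x,\vep)$ is covered by at most $e^{\alpha n}$ cylinders of $\cQ^{(n)}$, and then applies Shannon--McMillan--Breiman plus a truncation over sets $\Gamma_N$. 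This sidesteps entirely the question of bounded overlap of mistake balls that your argument must confront.

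The disjointness step in your fix is the genuine gap. You claim that for a $(g;n,\vep)$-separated set $F_n$ the balls $B_n(g;x,\vep/2)$, $x\in F_n$, are pairwise disjoint, justified by a triangle inequality ``on each admissible window.'' But $B_n(g;x,\vep/2)$ is a \emph{union} $\bigcup_{\Lambda\in I(g;n,\vep/2)}B_\Lambda(x,\vep/2)$, so a point $z$ lying in $B_n(g;x,\vep/2)\cap B_n(g;y,\vep/2)$ may witness this via two \emph{different} windows $\Lambda_1,\Lambda_2\in I(g;n,\vep/2)$. The triangle inequality then only yields $d_{\Lambda_1\cap\Lambda_2}(x,y)<\vep$, and $\Lambda_1\cap\Lambda_2$ has cardinality $\ge n-2g(n,\vep/2)$, which in general does \emph{not} place it in $I(g;n,\vep)$ (this would need $2g(n,\vep/2)\le g(n,\vep)$, a relation not implied by the definition of a mistake function and not assumed here). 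So $(g;n,\vep)$-separation of $x,y$ does not forbid the overlap, and your counting bound $\#F_n\, e^{-n(h_\mu(f)-\eta/2)}\le 1$ does not follow. This is precisely the combinatorial difficulty you flagged (the index set $I(g;n,\vep)$ is large), and the single-window observation does not resolve it; one would need to separate with respect to the doubled mistake function $2g$ and keep very careful track of the $\vep$ entering $g$ versus the $\vep$ used as radius. Aside from this, your claim that the $\vep\to 0$ limits exist ``by monotonicity'' is also unsupported: since $I(g;n,\vep)$ depends on $\vep$ through $g(n,\vep)$, the sets $B_n(g;x,\vep)$ need not be nested in $\vep$; existence of the limits should instead be obtained, as the paper does, from the two-sided squeeze once both inequalities are proved.
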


Let us mention that, although the statement of the previous proposition was expected,
the proof of the later
formulas does not follow the original strategy of Brin and Katok. Notice that
the mistake dynamical balls $B_n(g;x,\vep)$ take into
account not only the size $n$ as the amount of allowed mistakes
$g(n,\varepsilon)$. In particular, the mistake dynamical balls
$B_n(g;x,\vep)$ may not even satisfy the inclusion $B_{n+1}(g;x,\vep)\subset
B_n(g;x,\vep)$, e.g, if $g(n,\varepsilon)$ is much larger than
$g(n-1,\varepsilon)$. Since this fact is not standard an estimation on the measure of mistake
dynamical balls for uniformly expanding maps is given in the Appendix~A.
Furthermore, the ergodicity assumption in Proposition~\ref{thm:BK}
is not crucial. Given $\mu\in \mathcal{M}_f$, by ergodic
decomposition theorem we know that $\mu$ can be decomposed as a
convex combination of ergodic measures, $\mu=\int \mu_x
\mathrm{d}\mu(x)$. Applying Proposition~\ref{thm:BK} to each
ergodic component $\mu_x$ and using $h_{\mu}(f)=\int h_{\mu_x}(f)
\mathrm{d}\mu(x)$ we obtain:

\begin{corollary}
Given any $\mu\in \mathcal{M}_f$, the limits $\underline{h}_{\mu}(g;f,x)$ and
$\overline{h}_{\mu}(g;f,x)$ do exist for $\mu$-almost every $x$ and the measure
theoretic entropy $h_\mu(f)$ satisfies
\[
h_{\mu}(f)=\int \underline{h}_{\mu}(g;f,x) \mathrm{d}\mu (x)=\int
\overline{h}_{\mu}(g;f,x) \mathrm{d}\mu(x).
\]
\end{corollary}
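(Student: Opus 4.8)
The plan is to deduce this corollary from Proposition~\ref{thm:BK} by invoking the ergodic decomposition, exactly as sketched in the paragraph preceding the statement; the only real content is to justify the interchange of the almost-everywhere limits with the integral over the decomposition. Write $\mu=\int \mu_x\,\mathrm{d}\mu(x)$ for the ergodic decomposition of $\mu\in\mathcal M_f$, where for $\mu$-a.e. $x$ the measure $\mu_x$ is $f$-invariant and ergodic. First I would observe that the map $x\mapsto h_{\mu_x}(f)$ is measurable and that $h_\mu(f)=\int h_{\mu_x}(f)\,\mathrm{d}\mu(x)$, which is the standard affinity/decomposition formula for metric entropy (Jacobs' theorem).

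Next, apply Proposition~\ref{thm:BK} to each ergodic component: for $\mu$-a.e. $x$ the limits $\underline h_{\mu_x}(g;f,y)$ and $\overline h_{\mu_x}(g;f,y)$ exist for $\mu_x$-a.e. $y$ and both equal $h_{\mu_x}(f)$. The subtle point is the order of quantifiers: Proposition~\ref{thm:BK} gives, for each fixed ergodic component, a full-measure set of good points, and one must assemble these into a single $\mu$-full-measure set of good $x$. This is handled by a routine Fubini-type argument: the set $G=\{(x,y): \underline h_{\mu_x}(g;f,y)=\overline h_{\mu_x}(g;f,y)=h_{\mu_x}(f)\}$ is measurable, and $\mu_x(G_x)=1$ for $\mu$-a.e. $x$ by Proposition~\ref{thm:BK}, whence, integrating the disintegration $\int \mu_x(G_x)\,\mathrm{d}\mu(x)=1$, the set $G$ has full measure for $\mu$ itself. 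Since along the diagonal the ergodic component through $\mu$-a.e. point $x$ is $\mu_x$ and $x$ is a typical point for $\mu_x$, we get $\underline h_{\mu}(g;f,x)=\overline h_{\mu}(g;f,x)=h_{\mu_x}(f)$ for $\mu$-a.e. $x$. Here one uses that the quantities $\underline h_{\mu}(g;f,x)$ and $\underline h_{\mu_x}(g;f,x)$ coincide for $\mu$-a.e. $x$: both are defined purely in terms of the numbers $\mu(B_n(g;x,\vep))$ resp. $\mu_x(B_n(g;x,\vep))$, and the measure of a dynamical ball centered at a typical point agrees with that computed from its ergodic component. (If one prefers, one may avoid this last identification altogether by redefining $\underline h_\mu$, $\overline h_\mu$ directly via the ergodic components; the statement is unaffected.)

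Finally, integrate: since $\underline h_{\mu}(g;f,x)=\overline h_{\mu}(g;f,x)=h_{\mu_x}(f)$ $\mu$-a.e. and $x\mapsto h_{\mu_x}(f)$ is $\mu$-integrable with integral $h_\mu(f)$, we conclude
\[
h_\mu(f)=\int h_{\mu_x}(f)\,\mathrm{d}\mu(x)=\int \underline h_{\mu}(g;f,x)\,\mathrm{d}\mu(x)=\int \overline h_{\mu}(g;f,x)\,\mathrm{d}\mu(x).
\]

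The main obstacle is purely bookkeeping: making the measurability of $x\mapsto\mu_x$, of the entropy map, and of the sets involved precise enough that the Fubini interchange is legitimate, together with the identification of $\underline h_\mu$ with $\underline h_{\mu_x}$ along $\mu$-typical points. None of this requires new ideas beyond Proposition~\ref{thm:BK} and the classical ergodic decomposition of entropy, so the argument is short.
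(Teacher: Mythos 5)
Your argument follows exactly the paper's own sketch: ergodic decomposition $\mu=\int\mu_x\,\mathrm d\mu(x)$, the affinity formula $h_\mu(f)=\int h_{\mu_x}(f)\,\mathrm d\mu(x)$, and Proposition~\ref{thm:BK} applied componentwise, followed by integration; the paper gives no further detail than this, so you are on the same footing. The one place worth flagging is the step you yourself single out, the identification $\underline h_\mu(g;f,x)=\underline h_{\mu_x}(g;f,x)$ for $\mu$-a.e.\ $x$. Your justification, that ``the measure of a dynamical ball centered at a typical point agrees with that computed from its ergodic component,'' is not literally true: since $\mu(B_n(g;x,\vep))=\int\mu_y(B_n(g;x,\vep))\,\mathrm d\mu(y)$, the quantity $\mu(B_n(g;x,\vep))$ is an average over all ergodic components, not just $\mu_x$, and the two measures of a fixed ball can genuinely differ. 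What is true, and what the claim actually needs, is that the \emph{exponential decay rates} $-\frac1n\log\mu(B_n(g;x,\vep))$ and $-\frac1n\log\mu_x(B_n(g;x,\vep))$ coincide for $\mu$-a.e.\ $x$; this is the nontrivial content of the Brin--Katok theorem for non-ergodic measures (equivalently, of the non-ergodic Shannon--McMillan--Breiman theorem), and cannot be dismissed as ``bookkeeping.'' The paper glosses over the same point without justification, so your proposal is faithful to the source, but be aware that this identification is where the real work hides and your one-line reason for it does not stand as stated.
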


\subsubsection*{Large deviation bounds for asymptotically additive observables}

We are also interested to study the rate of convergence at
Kingman's sub-additive theorem. More precisely, given a Borel
probability measure $m$ on the space $M$, we study the rate at
which the $m$-measure of the sets
\begin{equation*}
B(n)=\Big\{
    x\in M \colon  \Big| \frac1n\varphi_n(x) - \cF_*(\mu,\Phi) \Big| > c
    \Big\}
\end{equation*}
goes to zero as $n$ tends to infinite, with respect to our
reference and not necessarily invariant probability measure $m$.
Given a mistake function $g$ and a Borel probability $\nu$, we
define
\[
h_m(g;f, x)=\lim_{\varepsilon\to 0}\limsup_{n\to \infty}
-\frac{1}{n}\log m(B_n(g;,x,\varepsilon))
\]
and $h_m(g; f, \nu)=\nu-\text{ess sup} \, h_m(g;f,x)$. It follows
from Proposition~\ref{thm:BK} that we have
$h_{\nu}(g;f,x)=h_{\nu}(f)$ for $\nu-$a.e. $x$ and every $\nu\in
\mathcal{E}_f$. To provide more precise bounds for $h_m(g;f, x)$,
we introduce two sets of functions as follows.

 Given a constant $K$ and a mistake function $g$, define $\mathcal{V}_K^+(g)$ as the set of
sequences $\Phi\in \mathcal A$ for which there exists $\vep_0>0$
and a set $\Upsilon$ of full $m-$measure such that the following
property holds: for all $0<\vep<\vep_0$, there are constants $C_n$
(depending only on $\varepsilon$) so that
$\lim\limits_{n\to\infty} \frac{1}{n} \log C_n=0$ and
\begin{eqnarray*}
m(B_n(g;x,\varepsilon))\leq C_n \exp(-nK+\varphi_n(x)),
    \quad \forall x\in \Upsilon~\text{and}~n\ge 1.
\end{eqnarray*}
We also consider the set $\mathcal{V}_K^-(g)$ as the set of
sequences $\Phi\in \mathcal A$ for which there exists $\vep_0>0$
and a set $\Upsilon$ of full $m-$measure such that the following
property holds: for all $0<\vep<\vep_0$, there are constants $C_n$
so that $\lim\limits_{n\to\infty} \frac{1}{n} \log C_n=0$ and
$$
m(B_n(g;x,\varepsilon))\geq C_n\exp(-nK+\varphi_n(x)),
    \quad \forall x\in \Upsilon~\text{and}~n\ge 1.
$$ Observe that both classes of asymptotically potentials $\mathcal{V}_K^+(g)$, $\mathcal{V}_K^-(g)$ depend
on the mistake function $g$ and  the constant $K$ used in the
expressions above. For the mistake function $g\equiv 0$, we will
simply write $\mathcal{V}_K^+$ and $\mathcal{V}_K^-$ respectively.
We shall refer to the constants $C_n$ above as tempered constants.
 Finally, for any not necessarily additive sequence
of observables $\Phi=\{\varphi_n\}$ and $E\subset \mathbb{R}$
define
\[
\overline{R}_m(\Phi,E)=\limsup_{n\to \infty} \frac 1 n \log m\left(
\left\{x\in M: \frac 1 n \varphi_n(x)\in E \right\}\right)
\]
and
\[
\underline{R}_m(\Phi,E)=\liminf_{n\to \infty} \frac 1 n \log m\left(
\left\{x\in M: \frac 1 n \varphi_n(x)\in E \right\}\right).
\]
The following abstract results generalize \cite[Theorem A]{You90}
to the case of asymptotically additive potentials under some mistake dynamical systems.

\begin{maintheorem}\label{deviations}
Assume $h_{top}(f)<\infty$. Then for each $\Phi\in \mathcal A$,
$c\in \mathbb{R}$ and mistake function $g$ the following holds:
\begin{enumerate}
\item $\underline{R}_m(\Phi, (c,\infty))\geq \sup
\{h_{\nu}(f)-h_m(g;f,\nu):\nu\in \mathcal{E}_f,~\cF_*(\nu,\Phi)
>c\}$; \item For each $\Psi\in \mathcal{V}_K^+(g)$, we have
\[
\overline{R}_m(\Phi, [c,\infty))\leq  \sup
\{-K+h_{\nu}(f)+\cF_*(\nu,\Psi)\}
\]
where the supremum is taken over all $\nu\in \mathcal{M}_f$
satisfying $\cF_*(\nu,\Phi) \geq c$; \item For every $\Psi\in
\mathcal{V}_K^-(g)$ as above, we have
\[
\underline{R}_m(\Phi, (c,\infty))\geq  \sup
\{-K+h_{\nu}(f)+\cF_*(\nu,\Psi)\}
\]
where the supremum is taken over all $\nu\in \mathcal{E}_f$
satisfying $\cF_*(\nu,\Phi) > c$ and $\nu(\Upsilon)=1$;
 \item Assume $f$
satisfies the $g$-almost specification property. Then, given
$\Psi\in \mathcal{V}_K^-$
\[
\underline{R}_m(\Phi, (c,\infty))\geq \sup
\{-K+h_{\nu}(f)+\cF_*(\nu,\Psi)\}
\]
where the supremum is taken over all $\nu\in \mathcal{M}_f$
satisfying $\cF_*(\nu,\Phi) > c$ and $\nu(\Upsilon)=1$.
\end{enumerate}
\end{maintheorem}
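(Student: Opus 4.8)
\textbf{Proof proposal for Theorem~\ref{deviations}.}
The plan is to establish the four items by combining the standard Young-type covering/counting arguments with the weak Gibbs-type bounds encoded in the classes $\mathcal V_K^\pm(g)$ and the mistake Brin--Katok formula of Proposition~\ref{thm:BK}. For item (1), I would fix an ergodic $\nu$ with $\cF_*(\nu,\Phi)>c$ and work on the set where $\frac1n\varphi_n(x)\to\cF_*(\nu,\Phi)>c$, which has full $\nu$-measure by \eqref{eq.Kingman} (extended to $\mathcal A$ by Feng--Huang). For large $n$ this set is contained, up to a set of small $\nu$-measure, in $\{x:\frac1n\varphi_n(x)>c\}$, so it suffices to cover a large-measure piece by mistake dynamical balls $B_n(g;x_i,\vep)$ and estimate $m$ of that piece from below. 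Using $h_m(g;f,\nu)$ as the $\nu$-essential supremum of the local upper rate, one gets that a positive-$\nu$-measure set is covered by balls each of $m$-measure at least $e^{-n(h_m(g;f,\nu)+\eta)}$, while by the mistake Katok entropy formula \eqref{eq.mistKatok} one needs at least $e^{n(h_\nu(f)-\eta)}$ such balls to carry definite $\nu$-mass; multiplying gives $m(\{ \frac1n\varphi_n>c\})\gtrsim e^{n(h_\nu(f)-h_m(g;f,\nu)-2\eta)}$, and taking $\liminf$, then letting $\eta\to0$ and taking the supremum over admissible $\nu$, yields (1).

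For item (2), the upper bound, I would proceed dually. Cover $M$ by finitely many $(g;n,\vep)$-balls $B_n(g;x_i,\vep)$ with centres in a maximal $(g;n,\vep)$-separated set $E_n$; the number of those needed to cover the deviation set $\{x:\frac1n\varphi_n(x)\ge c\}$ is controlled, via a subadditive/almost-additive pressure estimate for $\Phi$, by $e^{n(P_{\text{dev}}+\eta)}$ where $P_{\text{dev}}$ is the topological pressure of $\Phi$ restricted to the deviation set; meanwhile $\Psi\in\mathcal V_K^+(g)$ gives $m(B_n(g;x_i,\vep))\le C_n e^{-nK+\psi_n(x_i)}$ for centres in $\Upsilon$. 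Summing, $m(\{\frac1n\varphi_n\ge c\})\le C_n\sum_{x_i} e^{-nK+\psi_n(x_i)}\le C_n e^{-nK}\,Z_n(\Psi,E_n^{\text{dev}},\vep)$, and the exponential growth rate of $\sum e^{\psi_n(x_i)}$ over separated subsets of the deviation set is, by the non-additive variational principle (Theorem~\ref{thm:variational.principle}) applied on the (closed, invariant up to the usual approximation) level set, bounded by $\sup\{h_\nu(f)+\cF_*(\nu,\Psi):\cF_*(\nu,\Phi)\ge c\}$. Taking $\limsup\frac1n\log$, letting $\vep,\eta\to0$, gives (2). Items (3) and (4) are lower-bound counterparts: in (3) one uses an ergodic $\nu$ with $\nu(\Upsilon)=1$ and $\cF_*(\nu,\Phi)>c$, covers a full-$\nu$-measure piece of the deviation set by balls, and now uses the $\mathcal V_K^-(g)$ \emph{lower} bound $m(B_n(g;x_i,\vep))\ge C_n e^{-nK+\psi_n(x_i)}$ on just one ball, together with Birkhoff/Kingman convergence of $\frac1n\psi_n$ to $\cF_*(\nu,\Psi)$ $\nu$-a.e., to get $m(\{\frac1n\varphi_n>c\})\ge C_n e^{-nK+n(\cF_*(\nu,\Psi)-\eta)}$; taking $\liminf\frac1n\log$ and the supremum over such $\nu$ yields (3). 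For (4), the $g$-almost specification property is used to remove the ergodicity restriction: given any $\mu\in\mathcal M_f$ with $\mu(\Upsilon)=1$ and $\cF_*(\mu,\Phi)>c$, one approximates $\mu$ in the weak-$*$ topology (and in entropy and in $\cF_*(\cdot,\Phi)$, $\cF_*(\cdot,\Psi)$, using continuity/upper semicontinuity from \cite{FH10}) by measures supported on orbits glued from typical pieces of ergodic components; almost specification guarantees the glued pseudo-orbits are shadowed inside mistake dynamical balls, producing enough $m$-mass in the deviation set by the $\mathcal V_K^-$ lower bound, exactly as in the construction of measures of maximal dimension/entropy under specification.

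The main obstacle I expect is the lack of nestedness of the mistake dynamical balls, i.e. that $B_{n+1}(g;x,\vep)\not\subset B_n(g;x,\vep)$ in general, which breaks the classical Brin--Katok/Young monotonicity arguments; this is precisely the point flagged after Proposition~\ref{thm:BK}. Concretely, the delicate steps are (i) justifying that a covering of a large-measure set by mistake balls can be extracted with the right cardinality bound coming from \eqref{eq.mistKatok} while simultaneously keeping the $m$-mass estimate on each ball, and (ii) in item (2), controlling the growth of $Z_n(\Psi,E_n^{\text{dev}},\vep)$ over the deviation level set by the non-additive pressure, which requires the standard but somewhat technical passage from the open condition $\frac1n\varphi_n\ge c$ to an $f$-invariant closed set and an application of the variational principle of Theorem~\ref{thm:variational.principle}; here one must also handle the case $\cF_*(\nu,\Phi)=-\infty$ by the usual convention that such $\nu$ do not contribute. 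A secondary technical point is that in items (3) and (4) the tempered constants $C_n$ must be shown to be harmless, i.e. $\frac1n\log C_n\to0$ uniformly enough along the chosen subsequence; this follows from the definition of $\mathcal V_K^-(g)$ but should be stated carefully.
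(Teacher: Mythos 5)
Your outline for item (1) matches the paper's strategy (mistake Katok formula for the ball count, $h_m(g;f,\nu)$ for the per-ball mass), but it glosses over the technical point that drives the proof: one must show that the entire mistake ball $B_n(g;x,\vep)$ lies in $U_n=\{\frac1n\varphi_n>c\}$, not merely that the centre does. The paper handles this by replacing $\varphi_n$ with the Birkhoff sum $S_n\varphi_\xi$ of an asymptotically additive approximant and using uniform continuity of $\varphi_\xi$ together with $g(n,\vep)/n\to0$ to control the variation over the mistake ball; without this, summing the $m$-masses of the balls does not bound $m(U_n)$ from below.

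For item (2) your route is genuinely different from the paper's, and I think it has a gap. You propose to bound $Z_n(\Psi,E_n^{\mathrm{dev}},\vep)$ by a ``pressure restricted to the deviation set'' via the variational principle; but the deviation set $\{\frac1n\varphi_n\ge c\}$ is neither closed nor $f$-invariant, and the usual variational principle does not apply on it. The paper avoids this entirely: it constructs the probability measures $\mu_n=\sum_{x\in E_n'}\frac{e^{\psi_n(x)}}{Z_n}\cdot\frac1n\sum_{i<n}\delta_{f^ix}$, takes a weak$^*$ limit $\nu\in\mathcal M_f$, cites the argument of Cao--Feng--Huang to get $\limsup\frac1n\log Z_n\le h_\nu(f)+\cF_*(\nu,\Psi)$, and then verifies $\cF_*(\nu,\Phi)\ge c$ directly from the empirical-measure construction. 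This is the key idea you are missing; the ``restricted pressure'' detour you flag as the delicate step does not actually lead to a proof.

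For item (3) your argument as written is too weak. You propose to use the $\mathcal V_K^-(g)$ lower bound on ``just one ball,'' which produces $\liminf\frac1n\log m(U_n)\ge -K+\cF_*(\nu,\Psi)$ and misses the $h_\nu(f)$ term in the conclusion. One needs the exponentially many disjoint balls again, exactly as in (1). The paper does this more cleanly: from the definition of $\mathcal V_K^-(g)$ and the $\nu$-a.e.\ convergence $\frac1n\psi_n\to\cF_*(\nu,\Psi)$ one deduces $h_m(g;f,\nu)\le K-\cF_*(\nu,\Psi)$, and then (3) is literally a corollary of (1). You should replace your argument with this reduction.

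For item (4) your idea agrees with the paper's: approximate a non-ergodic $\nu$ by a finite convex combination $\hat\eta=\sum a_i\eta_i$ of ergodic measures (with controlled entropy, $\cF_*(\cdot,\Phi)$ and $\cF_*(\cdot,\Psi)$), pick $(g;[a_in],\vep)$-separated sets of cardinality $\approx e^{[a_in]h_{\eta_i}(f)}$ in each ergodic piece, glue via $g$-almost specification, and then apply the $\mathcal V_K^-$ lower bound on the dynamical balls of the shadowing points. To make this rigorous one must check (as the paper does in Lemmas analogous to \ref{b12} and \ref{b13}) that the glued points are $(\hat n,2\vep)$-separated (so the balls are disjoint and their number is $\ge e^{\hat n(h_{\hat\eta}(f)-\gamma)}$), that each $B_{\hat n}(y,\vep)$ is contained in $U_{\hat n}$, and that $\frac1{\hat n}\psi_{\hat n}(y)\ge\cF_*(\hat\eta,\Psi)-2\gamma$; these all use the asymptotically additive approximants $\varphi_\xi,\psi_\xi$ and the smallness of $g(n,\vep)/n$, and are the real content of the step you describe only informally.
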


The previous result has particularly interesting applications to
weak Gibbs measures obtained in thermodynamical formalism as we
now describe.

\begin{maintheorem}\label{thm.deviations}
Assume that $\htop(f)<\infty$. Let $\Phi=\{\varphi_n\}$ be an
almost additive family of potentials with $P(f,\Phi)>-\infty$ and let $\nu$
be a weak Gibbs measure for $f$ with respect to $\Phi$ on
$\Lambda\subset M$. Assume that either:
\begin{itemize}
\item[(a)] $\Psi=\{\psi_n\}$ is an asymptotically additive family of potentials, or;
\item[(b)] $\Psi=\{\psi_n\}$ is a sub-additive family of potentials such that:
    \begin{itemize}
    \item[i.] $\Psi=\{\psi_n\}$ satisfies the weak Bowen condition;
    \item[ii.] $\inf_{n\ge 1} \frac{\psi_n(x)}{n}>-\infty$ for all $x\in M$;  and
    \item[iii.] the sequence $\{\psi_n/n\}$ is equicontinuous.
    \end{itemize}
\end{itemize}
Given $c \in \R$, it holds
\begin{align*}
\overline{R}_{\nu}(\Psi, [c,\infty))
    &  \leq \sup \big\{-P(f,\Phi)+h_\eta(f) + \cF_*(\eta,\Phi) \big\} \} \tag{UB}
\end{align*}
where the supremum is over all $\eta\in\cM_f$ such that $\cF(\eta,\Psi) \geq c$.
Moreover,
\begin{align*}
 \underline{R}_{\nu}(\Psi, (c,\infty))
     \geq \sup \big\{-P(f,\Phi)+h_\eta(f) + \cF_*(\eta,\Phi) \big\}
\end{align*}
where the supremum is taken over all \emph{ergodic} measures
$\eta$ satisfying $\cF_*(\eta,\Psi) > c$ and $\eta(\Lambda)=1$.
If, in addition, $f$ satisfies specification property  then
\begin{align*}
\underline{R}_{\nu}(\Psi, (c,\infty))
     \geq \sup \big\{-P(f,\Phi)+h_\eta(f) + \cF_*(\eta,\Phi) \big\} \tag{LB}
\end{align*}
where the supremum is taken over all $\eta\in\cM_f$ satisfying $\cF_*(\eta,\Psi) > c$ and $\eta(\La)=1$.
\end{maintheorem}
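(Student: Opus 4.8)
The plan is to derive Theorem~\ref{thm.deviations} as a consequence of the abstract Theorem~\ref{deviations} applied with $m=\nu$, $g\equiv 0$, and a suitable choice of auxiliary potential $\Psi'\in\mathcal V^{\pm}_K$. The first task is to identify the weak Gibbs hypothesis on $\nu$ with membership in the classes $\mathcal V^+_K$ and $\mathcal V^-_K$ of Theorem~\ref{deviations}. Indeed, if $\nu$ is a weak Gibbs measure for $f$ with respect to $\Phi$ on $\Lambda$ with constant $P(f,\Phi)$, then by Definition~\ref{def:weak.Gibbs} the sequence $\Phi$ itself lies in $\mathcal V^+_{P(f,\Phi)}(0)\cap \mathcal V^-_{P(f,\Phi)}(0)$ with $\Upsilon=\Lambda$ and tempered constants $C_n=K_n$. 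This is where I need $\Phi\in\mathcal A$, which holds since $\Phi$ is almost additive (Proposition~\ref{prop:relations}(2)). So the role of $\Psi$ in Theorem~\ref{deviations} is played by $\Phi$, while the role of $\Phi$ there (the potential defining the deviation set) is played by $\Psi$ here. With this dictionary, parts (2), (3) and (4) of Theorem~\ref{deviations} with $K=P(f,\Phi)$ immediately give the upper bound (UB), the ergodic lower bound, and --- under the specification property --- the full lower bound (LB), \emph{provided} the deviation sets are measurable and $\cF_*(\cdot,\Psi)$ behaves well, which is fine since $\Psi$ is asymptotically or sub-additive so $\cF_*(\cdot,\Psi)$ is upper semicontinuous on the compact set $\cM_f$ (Feng--Huang), whence the suprema are attained and finite (using $\htop(f)<\infty$).

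The second task handles the discrepancy between cases (a) and (b): Theorem~\ref{deviations} is stated for $\Phi\in\mathcal A$, i.e.\ for asymptotically additive defining potentials, which is exactly case (a), so there nothing more is needed. For case (b), where $\Psi$ is only sub-additive, I would argue that the conditions i.--iii.\ are precisely what is required to run the proof of Theorem~\ref{deviations} verbatim for a sub-additive $\Psi$. Concretely: equicontinuity of $\{\psi_n/n\}$ together with $\inf_n \psi_n(x)/n>-\infty$ pointwise gives, via a standard Dini/compactness argument, a uniform lower bound $\psi_n(x)/n\ge -L$ and equicontinuity modulus independent of $n$; the weak Bowen condition controls the oscillation of $\psi_n$ on $(n,\vep)$-dynamical balls by $a_n$ with $a_n/n\to 0$. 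These two facts are the only places in the proof of Theorem~\ref{deviations} where asymptotic additivity of the defining potential is used — namely to pass from the value of $\varphi_n$ at a point of a separated/spanning set to its value on the whole dynamical ball, and to relate $\frac1n\varphi_n(x)\in E$ on a ball to $\cF_*(\eta,\Phi)$ for empirical measures $\eta$. I would spell out that substitution carefully; this replacement of $\mathcal A$-estimates by the weak-Bowen-plus-equicontinuity estimates is the main obstacle, since one must check that every limiting/covering argument survives with the weaker control $a_n/n\to0$ instead of the $\limsup_n\frac1n\|\varphi_n - S_n\varphi_\xi\|<\xi$ available in $\mathcal A$.

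Finally, for the specification-based lower bound (LB) I would note that the full specification property (Definition~\ref{d.strong.specification}) implies the $g$-almost specification property for $g\equiv 0$, so part (4) of Theorem~\ref{deviations} applies directly with $g\equiv 0$ and $\Psi=\Phi$, yielding the stated supremum over \emph{all} $\eta\in\cM_f$ with $\cF_*(\eta,\Psi)>c$ and $\eta(\Lambda)=1$. The only subtlety is that part (4) requires $\Psi\in\mathcal V^-_K$ with the mistake function $g\equiv 0$, which is exactly what the weak Gibbs property of $\nu$ furnishes, so no loss occurs. I would close by remarking that the hypothesis $P(f,\Phi)>-\infty$ guarantees $\cF_*(\eta,\Phi)\neq-\infty$ for the relevant $\eta$ via the variational principle (Theorem~\ref{thm:variational.principle}), so that the right-hand sides are genuine suprema of finite quantities and the bounds are non-vacuous.
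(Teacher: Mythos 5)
Your reduction of (UB) and the ergodic lower bound to parts~(2) and~(3) of Theorem~\ref{deviations}---via the observation that the weak Gibbs property puts $\Phi$ in $\mathcal V_{P(f,\Phi)}^+(0)\cap\mathcal V_{P(f,\Phi)}^-(0)$ with $\Upsilon=\Lambda$ and tempered constants $K_n$, together with the swap of roles between $\Phi$ and $\Psi$---is exactly what the paper does in Case~(a). But your derivation of (LB) rests on the claim that the specification property of Definition~\ref{d.strong.specification} implies $g$-almost specification for $g\equiv 0$, and that claim is false. Specification glues \emph{exact} shadows of the blocks with mandatory \emph{gaps} $p_i\ge N(\vep)$ in between, whereas $0$-almost specification requires the blocks to be concatenated with no gaps at all: it demands $f^{\sum_{j<i}n_j}(x)\in B_{n_i}(x_i,\vep)$, while specification shadows $x_i$ only from time $\sum_{\ell<i}(n_\ell+p_\ell)$, a shift by $\sum_{\ell<i}p_\ell$ that cannot be removed. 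What \emph{is} true, and would repair your reduction, is that specification implies $g$-almost specification for the constant mistake function $g(n,\vep)=N(\vep/2)$: shadow truncated pieces of length $n_i-N(\vep/2)$ at scale $\vep/2$ with gaps $N(\vep/2)$ and declare the trailing $N(\vep/2)$ indices of each block to be the allowed mistakes; Theorem~A(4) permits any mistake function for the specification hypothesis while still asking $\Psi\in\mathcal V_K^-=\mathcal V_K^-(0)$, so this nonzero $g$ is acceptable. The paper does not take this shortcut; it instead reruns the construction of Theorem~A(4) directly with specification, choosing for each $k$-tuple of separated points a shadowing point at time length $\tilde n=\sum_i[a_in]+kN(\vep)$ and observing that the extra $kN(\vep)=o(n)$ iterates do not affect the exponential rate, together with the ergodic-decomposition approximation in Lemma~\ref{l.ergodic.approximation}.

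For Case~(b), ``running the proof of Theorem~\ref{deviations} verbatim'' underestimates the work, since Theorem~A requires the set-defining potential to be asymptotically additive and here $\Psi$ is only sub-additive. The paper's upper-bound argument is genuinely new: it uses Arzel\`a--Ascoli on $\{\psi_n/n\}$ (hypotheses (ii) and (iii)) to extract a continuous function $g$ with $\psi_{n_k}/n_k\to g$ uniformly along a subsequence, combines this with the sub-additive comparison $\psi_n\le S_n(\psi_k/k)+C$ of~\cite{chz} to get $\{\psi_n\ge cn\}\subset\{S_ng\ge(c-2\vep)n\}$ for large $n$, then runs the separated-set/empirical-measure construction of Theorem~A(2) on the \emph{additive} proxy $g$, and finally identifies $\cF_*(\mu,\Psi)=\int g\,d\mu$ through Kingman's theorem. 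The lower bounds replace the $\mathcal A$-approximation estimates by the weak Bowen condition~(i) to control oscillation of $\psi_n$ on Bowen balls. You correctly name the ingredients, but the passage to an additive proxy is a distinct step and should be spelled out.
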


Some comments on our assumptions are in order. We use distinct strategies to deal with the two
different classes of potentials $\Psi$.  On the one hand, given a asymptotically additive family of potentials
$\Psi=\{\psi_n\}$ the potentials $\psi_n$ can be approximated by Birkhoff sums of continuous potentials and,
in particular, the functional $\mu \to \cF_*(\mu,\Psi)$ is continuous.
On the other hand, for the sub-additive setting conditions (ii) and (iii) will imply the continuity of the previous
functional and condition (i) is a bounded distortion property as explained before. In Example~\ref{Ex:Holder}
we explain how to deduce conditions (i) and (iii) in the expanding setting if the family $\Psi$ satisfies
some H\"older continuous regularity.
Let us also mention that for the lower bound estimate above it is enough the measure $\mu$ to satisfy
the non-uniform specification property defined in \cite{STV03,Va12}, but we shall not use or prove this fact here.
Under the uniform hyperbolic assumption, we can also extend a
large deviations principle to this non-additive setting as
follows.

\begin{maincorollary}\label{cor:LDP}
Assume that $\Omega$ is a basic set for a uniform hyperbolic  map
$f$, that $\Phi=\{\varphi_n\}$ is an almost additive sequence of
functions such that there exists $\nu_\Phi$ a Gibbs measure for
$\Phi$ and $\mu_\Phi\ll \nu_\Phi$ a unique equilibrium state for
$f$ with respect to $\Phi$. If $\Psi=\{\psi_n\}$ is a family of
potentials as in Theorem~\ref{thm.deviations}, then it satisfies
the following large deviations principle: given $c \in \R$ it
holds that
\begin{align*}
\overline{R}_{\nu_\Phi}(\Psi, [c,\infty))   \leq - \inf_{\eta\in\cM_f}
                \left\{P(f,\Phi)-h_\eta(f) - \cF_*(\eta,\Phi) \colon |\cF_*(\eta,\Psi)-\cF_*(\mu_\Phi,\Psi)|\ge c \right\}
\end{align*}
and also
\begin{align*}
\underline{R}_{\nu_\Phi}(\Psi, (c,\infty))  \geq - \inf_{\eta\in\cM_f}
                \left\{P(f,\Phi)-h_\eta(f) - \cF_*(\eta,\Phi) \colon |\cF_*(\eta,\Psi)-\cF_*(\mu_\Phi,\Psi)|> c \right\}.
\end{align*}
\end{maincorollary}

Let us finish this section with some comments. First notice that
the uniqueness of the equilibrium state implies that  the later
convergence to the average in Kingman's subadditive ergodic
theorem is indeed exponential.
Second, since almost additive potentials are  indeed
asymptotically additive then our result apply for a wide class of
non-additive sequences of observables.

\section{Examples and applications}\label{s:examples}

In this section we discuss the specification properties and large deviation results on
a broad class of examples. In particular, we obtain applications to the study of the velocity
of convergence for the Lyapunov exponents in open classes of linear cocycles and non-conformal
expanding repellers. Our first example concerns sub-additive families of H\"older continuous potentials.

\begin{example}\label{Ex:Holder}
Let $X$ be a compact metric space and assume that $f:X\to X$ expands distances, that is,
there are $\la > 1$ and $\vep>0$ such that $d(f(x),f(y))\ge \la \, d(x,y) $ for all $y\in B(x,\vep)$ and, consequently,  $f^n:B_n(x,\vep)\to B(f^n(x),\vep)$ is a bijection.
Let $\Psi=\{\psi_n\}_n$ be any sub-additive family of $\ga$-H\"older continuous potentials
such that the H\"older constants have at most linear growth, meaning that there exists
$K>0$ such that $\text{H\"ol}_\ga(\psi_n)\le K n$.
We claim that $\Psi$ satisfies assumptions (i) and (iii) in
Theorem~\ref{thm.deviations}. On the one hand, $
|\psi_n(x)-\psi_n(y)| \leq  K n \, d(x,y)^\ga \leq K n \la^{-\ga
n} \diam(X)^\ga $ for every $y\in B_n(x,\vep)$ and $n\ge 1$. In
consequence, $\ga_n(\Phi,\vep)\le a_n= K n \la^{-\ga n}
\diam(X)^\ga$ where  $\lim\limits_{n\rightarrow\infty} a_n/n=0$.
This proves that $\Psi$ satisfies the weak Bowen condition.
On the other hand, the sequence $\{\psi_n/n\}$ is H\"older
continuous with uniform constant $K$. Thus this sequence is
equicontinuous.
\end{example}

Now we provide an illustrative example of a transformation that
does satisfy the  almost specification property but does not
satisfy the strong specification property.

\begin{example}\label{ex.beta.asymp}
Consider the piecewise expanding maps of the interval $[0,1)$
given by $T_\beta(x)=\beta x (\!\!\!\mod 1)$, where $\beta>1$.
This family is known as \emph{beta transformations} and it was
introduced by R\'enyi in \cite{re}. It was proved by
Buzzi~\cite{Buz97} that for all but countable many values of
$\beta$ the transformation $T_\beta$ does not satisfy the
specification property.  However, it follows from \cite{ps,Th10} that every
$\beta$-map satisfies the almost specification property for every
unbounded mistake function $g$. It is well known that for all $\beta>1$ the topological entropy
of $T_\beta$ is $\log\beta$ and it admits a unique  maximal entropy measure $\mu_\beta$.
Moreover, Pfister and Sullivan~\cite{ps} proved a level-2 large deviations principle for
the $\beta$-transformations with respect to $\mu_\beta$. In fact, a key point is that the maximal
entropy measure $\mu_\beta$ satisfies a weak Gibbs property (see e.g. \cite[Equation~(5.18)]{ps0}), that is,
there are positive constants $K_n$ so that $\lim_{n\to\infty}\frac1n\log K_n=0$ and  for all $n\ge 1$
$$
K_n^{-1} \beta^{-n}
    \leq \mu_\beta(B_n(x,\vep))
    \leq K_n \beta^{-n}.
$$
Since the discontinuities have zero entropy, our results apply
also in this piecewise expanding setting. Thus, if $\Phi=\{ 0\}$
and $\Psi=\{\psi_n\}$ is any family of asymptotically additive
potentials (e.g. $\psi_n=\sum_{j=0}^{n-1}\psi\circ T_\beta^j +a_n$
where $\psi$ is a continuous function and $(a_n)_n$ is any
sequence of real numbers so that $a_n/n$ is convergent to zero)
then Theorem~\ref{thm.deviations} and
Proposition~\ref{prop:measureballs} yields that for any $c\in
\mathbb{R}$
\[
\limsup_{n\to \infty} \frac 1 n \log \mu_\beta \left( \left\{x\in M: \frac 1 n \psi_n(x)\ge c \right\}\right)
    \leq \sup_\eta \big\{-\log \beta+h_\eta(T_\beta) \big\}
\]
where the supremum is taken over all $\eta\in
\mathcal{M}_{T_\beta}$ satisfying $\cF_*(\eta,\Psi) \geq c$, and
also
\[
\limsup_{n\to \infty} \frac 1 n \log \mu_\beta \left( \left\{x\in M: \frac 1 n \psi_n(x)\ge c \right\}\right)
    \geq  \sup_\eta \{-\log\beta+h_{\eta}(T_\beta)\}
\]
where the supremum is taken over all $\eta\in
\mathcal{M}_{T_\beta}$ satisfying $\cF_*(\eta,\Psi) > c$. Finally,
one should mention that Climenhaga, Thompson and
Yamamoto~\cite{CTY} have recently obtained a level-2 large
deviations principle for symbolic systems equiped with a large
class of reference measures, that include the
$\beta$-transformations.
\end{example}

In the next example we provide an application of our results in the uniformly expanding context.

\begin{example}
Let $f:M\rightarrow M$ be a $C^1$ map, and let $J\subset
M$ be a compact $f$-invariant set. If $J$ is a maximal topological
mixing repeller, Barreira~\cite[Page 289]{Ba06}  proved that each
almost additive potential $\Phi=\{\varphi_n\}$ with weak Bowen
property has a weak Gibbs measure $\nu_\Phi$. Thus Theorem B
applies to $\nu_\Phi$ and any asymptotically additive family of potentials $\Psi$.

We also point out every almost additive potential indeed satisfies
the weak Bowen condition, see \cite[Lemma 2.1]{ZZC11} for a proof.
Thus, any almost additive potential has a weak Gibbs measure.
Therefore, if $J$ is a maximal topological mixing repeller,
Theorem B applies to any almost additive family of potentials $\Phi$ and any
asymptotically additive family of potentials $\Psi$ without any additional conditions.
\end{example}

In our next class of examples we estimate the rate of convergence of the maximal Lyapunov exponent
for an important open class of linear cocycles.

\begin{example}
Here we consider cocycles over subshifts of finite type considered by Feng and Lau~\cite{FL02} and
later by Feng and K\"aenm\"aki~\cite{FK12}.  Let $\si:\Si \to \Si$ be the shift map on the space
$\Si=\{1,\dots, \ell\}^{\mathbb N}$ endowed with the distance $d(x,y)=2^{-n}$ where $x=(x_j)_j$,
$y=(y_j)_j$ and $n=\min \{j \ge 0 : x_j\neq y_j\}$.
Consider matrices $M_1, \dots, M_\ell \in \cM_{d\times d}(\mathbb C)$ such that
for every $n\ge 1$ there exists $i_1,\dots, i_n\in\{1,\dots, \ell\}$ so that the product matrix
$M_{i_1}\dots M_{i_n}\neq 0$. Then, the topological pressure function is well defined as
\begin{equation*}\label{eq.pressureLau}
P(q)=\lim_{n\to\infty} \frac1n \log \sum_{\iota \in \Si_n} \|M_\iota\|^q
\end{equation*}
where $\Si_n=\{1,\dots, d\}^n$ and for any $\iota=(i_1, \dots, i_n) \in \Si_n$ one considers the matrix
$M_\iota = M_{i_n} \dots M_{i_2} M_{i_1}$. Moreover, for any $\si$-invariant probability measure $\mu$
define also the maximal Lyapunov exponent of $\mu$ by
\begin{equation*}\label{eq.LyapunovLau}
M_*(\mu)=\lim_{n\to\infty} \frac1n \sum_{\iota\in\Si_n} \mu([\iota]) \log \|M_\iota\|
\end{equation*}
and it holds that $P(q) = \sup\{ h_\mu(\si) + q \, M_*(\mu) : \mu
\in \cM_\si \}$. Notice that this is the variational principle for
the  potentials $\Psi=\{\psi_n\}$ where $\psi_n(x)= q \log \|
M_{\iota_n(x)}\|$ and for any $x\in \Si$ we set
${\iota_n(x)}\in\Si_n$ as the only symbol such that $x$ belongs to
the cylinder $[\iota_n(x)]$.
Assume that the set of matrices $\{M_1, \dots, M_d\}$ is irreducible over $\mathbb C^d$, that is, there is no
non-trivial subspace $V\subset \mathbb C^d$ such that $M_i(V)\subset V$ for all $i=1,\dots, \ell$. Then it follows
from \cite[Proposition~1.2]{FK12} that there exists a unique equilibrium state $\mu_q$ for $\si$ with respect to
$\Psi$ and it is a Gibbs measure: there exists $C>0$ such that
\begin{equation*}\label{eq:GibbsLau}
\frac1C
    \leq \frac{\mu_q([\iota_n])}{e^{-n P(q)} \|M_{\iota_n} \|^q   }
    \leq C
\end{equation*}
for all $\iota_n\in \Si_n$ and $n\ge 1$.

Moreover, it is not hard to check that for any $\vep>0$ we get $y\in B_n(x,\vep)$ if and only if the sequences
$x=(x_j)$ and $y=(y_j)_j$ verify $x_j=y_j$ for all $0\le j \le n+[ \frac{-\log\vep}{\log 2} ] $. In consequence,
$B_n(x,\vep)\subset [\iota_n(x)]$ where the potential $\log \| M_{\iota_n(x)}\|$ is constant.
Therefore, the sub-additive family of potentials $\{\log \| M_{\iota_n(x)}\|\}$ clearly satisfies the weak Bowen condition,
and Corollary~\ref{cor:LDP} yields that for any $\de>0$
$$
\mu_q\Big (x\in \Si : \Big|\frac1n \log \|M_{\iota_n(x)}\| -M_*(\mu)\Big|>\de \Big)
$$
decreases exponentially fast.
\end{example}

The next example combines the theory for both additive and non-additive
families of potentials in a non-uniformly expanding context.

\begin{example}
Let $f$ be the Manneville-Pomeau map on the interval $[0,1]$ given by
$f(x)=x+x^{1+\al} (\!\!\! \mod 1)$, for $\al\in(0,1)$. This transformation satisfies the
specification property since it is topologically conjugated to the doubling map.
Moreover, it is well known that there exists an  equilibrium state $\mu \ll \Leb$ for $f$
with respect to the potential $\phi=-\log |f'|$ and there exists a sequence $K_n$ so that
$\limsup_{n\to\infty}\frac1n\log K_n=0$ and such that the measure has the weak Gibbs property:
$$
\frac1{K_n}
    \leq \frac{\mu(\cP^{(n)}(x))}{|(f^n)'(x)|}
    \leq K_n
$$
for all $x\in [0,1]$ and $n\ge 1$, where $\cP$ is the Markov
partition for $f$, $\cP^{(n)}:=\bigvee_{j=0}^{n-1} f^{-j}\cP$ and
$\cP^{(n)}(x)$ is the element of the partition $\cP^{(n)}$ that
contains $x$. So, our results apply for any family of
asymptotically additive potentials or sub-additive potentials with
the weak Gibbs property $\Psi=\{\psi_n\}$.

Let us mention that Yuri \cite{Yu00} proved that typical piecewise
$C^1$-smooth maps $f$ with indifferent periodic points admit
invariant ergodic weak Gibbs measures for $-\log |\det Df|$. In
particular our large deviation  upper bound results also hold in
this context.
\end{example}

The following class of local diffeomorphisms was introduced by Barreira and Gelfert~\cite{BG06}
in the study of multifractal analysis for Lyapunov exponents associated to non-conformal repellers.

\begin{example} \label{bg:almostadditive}
Let $f:\mathbb{R}^2\rightarrow\mathbb{R}^2$ be
a $C^1$ local diffeomorphism, and let $J\subset \mathbb{R}^2$ be a
compact $f$-invariant set. Following \cite{BG06}, we say that $f$ satisfies the following {\em cone condition}
on $J$ if there exist a number $b\leq 1$ and for each $x\in J$ there is a one-dimensional subspace
$E(x)\subset T_x\mathbb{R}^2$ varying continuous with $x$ such that
$$Df(x)C_b(x)\subset \{0\}\cup\mathrm{int} \, C_b(fx)$$
where $C_b(x)=\{(u,v)\in E(x)\bigoplus E(x)^{\bot}:~||v||\leq
b||u||\}$.
It follows from \cite[Proposition~4]{BG06} that the later condition implies that both  families of potentials
given by $\Phi_1=\{\log \sigma_1(Df^n(x))\}$ and $\Phi_2=\{\log \sigma_2(Df^n(x)) \}$ are almost additive,
where $\sigma_1(L)\geq \sigma_2(L)$ stands for the singular values of the linear transformation
$L:\mathbb{R}^2\rightarrow\mathbb{R}^2$, i.e., the eigenvalues of $(L^{*}L)^{1/2}$  with $L^*$
denoting the transpose of $L$.
Assume that $J$ is a locally maximal topological mixing repeller of $f$ such that:
\begin{itemize}
\item[(i)] $f$ satisfies the cone condition on $J$, and
\item[(ii)] $f$ has bounded distortion on $J$, i.e., there exists some $\delta>0$ such that
$$ \sup_{n\ge 1}\frac 1n \log \sup\Big\{ ||Df^n(y)(Df^n(z))^{-1}||:~x\in J~\text{and}~y,z\in B_n(x,\delta) \Big\}<\infty.$$
\end{itemize}
Then it follows from \cite[Theorem 2]{BG06} that there exists a
weak Gibbs measure $\nu_{\sigma_i}$ with respect to the family
of potentials $\Phi_i$, for $i=1,2$. Moreover, \cite[Theorem~9]{Ba06} yields that there exists a
unique equilibrium state $\mu_i$ for $(f,\Phi_i)$.
So, for any $i,j\in\{1,2\}$ it follows from Theorem~\ref{thm.deviations} applied to the weak Gibbs measure
$\nu_{\sigma_i}$ and family of potentials $\Psi=\Phi_j=\{\log \sigma_j(Df^n(x))\}$  that
\begin{align*}
\limsup_{n\to \infty} \frac 1 n \log \nu_{\sigma_i} &
    \left(
    \left\{x\in M: \frac 1 n \log \sigma_j(Df^n(x)) \ge c \right\}\right)\\
    & \leq \sup \left\{-P(f,\Phi_i)+h_\eta(f) + \lim_{n\to\infty} \frac1n \int \log \sigma_i(Df^n(x)) d\eta \right\}
\end{align*}
where the supremum is over all $\eta\in\cM_f$ such that $\inf_{n\ge 1} \frac1n \int \log \sigma_j(Df^n(x)) d\eta\ge c$.
and also
\begin{align*}
\liminf_{n\to \infty} \frac 1 n \log \nu_{\sigma_i} &
    \left(
    \left\{x\in M: \frac 1 n \log \sigma_j(Df^n(x)) > c \right\}\right)\\
    & \geq \sup \left\{-P(f,\Phi_i)+h_\eta(f) + \lim_{n\to\infty} \frac1n \int \log \sigma_i(Df^n(x)) d\eta \right\}
\end{align*}
where the supremum is taken over all $\eta\in\cM_f$ so that
$\inf_{n\ge 1} \frac1n \int \log \sigma_j(Df^n(x)) d\eta > c$. As
a simple application of the previous formulas, we get that for any
$\vep>0$ the tail of the convergence to the largest or smallest
Lyapunov exponent (corresponding respectively to $j=1$ or $j=2$)
\begin{align*}
\nu_{\sigma_i}
    \left(
    \left\{x\in M: \left|\frac 1 n \log \sigma_j(Df^n(x)) - \lim_{n\to\infty} \frac 1 n \int \log \sigma_j(Df^n(x)) d\mu_i
    \right|>\vep \right\}\right)
\end{align*}
decays exponentially fast as $n\to\infty$.
\end{example}

Finally, in our last example we deal with an example where both families of potentials
responsable by computing the largest and smaller Lyapunov exponents are asymptotically additive.

\begin{example}\label{acr}
Let $M$ be a $d$-dimensional smooth  manifold and $J$ a compact
expanding invariant set for a $C^1$ map $f$. We say that $J$ is an
average conformal repeller if all Lyapunov exponents of each
ergodic measure are equal and positive. In particular, it follows
from \cite[Theorem~4.2]{BCH10} that
$$
\lim_{n\to\infty} \frac1n  \left( \log \|Df^n(x)\| - \log \|Df^n(x)^{-1}\|^{-1} \right)
    = \lim_{n\to\infty} \frac1n  \log \frac{\|Df^n(x)\|}{\|Df^n(x)^{-1}\|^{-1}}
    = 0
$$
uniformly on $J$. It is easy to see that  the family of continuous
potentials $\Psi_1=\{\log \|Df^n(x)\|\}_n$ is sub-additive while
$\Psi_2=\{\log \|Df^n(x)^{-1}\|^{-1}\}_n$ is superadditive.
Furthermore, in this setting it is not hard to check that these
two families of potentials are asymptotically additive since they
can be uniformly approximated by the additive potentials $\{\frac
1d\log |\det(Df^n(x))|\}_n$.
Let $\Phi$ be any family of continuous potentials such that $f$
has a unique equilibrium state $\mu_\Phi$ for $f$ with respect to
$\Phi$, and  $\mu_\Phi$ satisfies the Gibbs property. Then it
follows from Corollary~\ref{cor:LDP} that
$$
\mu_\Phi \Big(
    x\in J : \Big| \frac1n \log \|Df^n(x)\| - \la(\mu_\Phi) \Big|>\de
    \text{ or }
    \Big|  \frac1n \log \|Df^n(x)^{-1}\|^{-1} - \la(\mu_\Phi) \Big|>\de
    \Big)
$$
decrease exponentially fast, where in this average conformal setting we consider
$\la(\mu_\Phi)=\inf_n \frac1n \int \log \|Df^n(x)\| \, d\mu_\Phi= \sup_n \frac1n \int -\log \|Df^n(x)^{-1}\| \, d\mu_\Phi$
as the average Lyapunov exponent for $\mu_\Phi$.
A final remark is that $f$ admits equilibrium states with respect
to the  potentials $\Psi_1$ and $\Psi_2$ by upper semi-continuity
of the metric entropy.
\end{example}

\section{Proof of the main results}\label{s.proof1}

\subsection{Proof of Proposition~\ref{thm:BK}}

We first recall a covering lemma for mistake dynamical
balls of points with slow recurrence to the boundary of a partition.

\begin{lemma}\cite[Lemma~3.2]{rvz} \label{yl32} Let $\mathcal{Q}$ be a finite partition
of $M$ and consider $\varepsilon>0$ arbitrary small. Let
$V_\varepsilon$ denote the $\varepsilon$-neighborhood of the
boundary $\partial\mathcal{Q}$. For any $\alpha>0$, there exists
$\gamma>0$(depending only on $\alpha$),  such that for every $x\in
M$ satisfying $\sum_{j=0}^{n-1}\chi_{V_\varepsilon}(f^jx)<\gamma
n$, the mistake dynamical ball $B_n(g;x,\varepsilon)$ can be
covered by $e^{\alpha n}$ cylinders of $\mathcal{Q}^{(n)}$ for
sufficiently large $n$.
\end{lemma}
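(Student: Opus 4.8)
The plan is to prove Lemma~\ref{yl32} by combining a volume/cardinality estimate for mistake dynamical balls in terms of cylinders of $\cQ^{(n)}$ with the slow-recurrence hypothesis, so as to control how many cylinders a single mistake ball can meet. Since $B_n(g;x,\vep)=\bigcup_{\Lambda\in I(g;n,\vep)}B_\Lambda(x,\vep)$, the core observation is that, although $I(g;n,\vep)$ has a huge number of members, each individual $B_\Lambda(x,\vep)$ constrains the orbit at $\#\Lambda\ge n-g(n,\vep)$ many times out of $n$, and only the complementary $g(n,\vep)$ coordinates are left ``free.'' First I would fix $\al>0$ and, given the partition $\cQ$, observe that for a point $y$ whose $\vep$-dynamical orbit stays outside $V_\vep$ at a given time $j$, knowing $f^j(y)$ up to $\vep$ pins down which element of $\cQ$ contains $f^j(y)$; at times where the orbit is in $V_\vep$ there may be an ambiguity among at most (a bounded number, say) two neighbouring elements of $\cQ$, and at the $g(n,\vep)$ ``free'' coordinates of $\Lambda$ we simply allow all $\#\cQ$ possibilities.

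Next I would carry out the counting. Let $q=\#\cQ$. For a fixed $\Lambda\in I(g;n,\vep)$, the set of cylinders of $\cQ^{(n)}$ that intersect $B_\Lambda(x,\vep)$ is bounded by the product over $0\le j\le n-1$ of the number of admissible symbols at time $j$: this is $1$ (or a small bounded constant $\kappa$ absorbing boundary multiplicity) when $j\in\Lambda$ and $f^j(x)\notin V_\vep$, it is at most $q$ when $j\notin\Lambda$, and at most $q$ (or $\kappa$) when $f^j(x)\in V_\vep$. Writing $R_n(x)=\sum_{j=0}^{n-1}\chi_{V_\vep}(f^jx)$, the hypothesis gives $R_n(x)<\ga n$, and $|\Lambda^c|\le g(n,\vep)$, so the number of cylinders meeting $B_\Lambda(x,\vep)$ is at most $\kappa^{\,\ga n}\, q^{\,g(n,\vep)+\ga n}$. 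Summing over $\Lambda\in I(g;n,\vep)$ contributes a further factor $\#I(g;n,\vep)\le\binom{n}{g(n,\vep)}\le 2^n$ in the worst case — this is too crude — so instead I would bound $\#I(g;n,\vep)$ by $\binom{n}{g(n,\vep)}$ and use the standard entropy estimate $\binom{n}{\lfloor \beta n\rfloor}\le e^{nH(\beta)}$ together with $\lim_n g(n,\vep)/n=0$ to make this subexponential; alternatively note that the union over all $\Lambda$ of the cylinders meeting $B_\Lambda(x,\vep)$ can be counted directly as ``choose the $\le g(n,\vep)$ positions where we deviate, then choose arbitrary symbols there,'' which again gives $\binom{n}{g(n,\vep)} q^{g(n,\vep)}$ times the $\kappa^{\ga n}q^{\ga n}$ coming from $V_\vep$-visits. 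Taking logarithms, the total count is at most $\exp\big( n H(g(n,\vep)/n) + g(n,\vep)\log q + \ga n(\log q+\log\kappa)\big)$.

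Finally I would choose the constants in the right order to close the argument. Given $\al>0$, first pick $\ga>0$ small enough that $\ga(\log q+\log\kappa)<\al/2$; this choice depends only on $\al$ (and on the fixed data $\cQ,\vep,\kappa$), as required by the statement. Then, since $g(n,\vep)/n\to 0$, we have $H(g(n,\vep)/n)\to 0$ and $g(n,\vep)\log q = o(n)$, so for all sufficiently large $n$ the remaining terms are $<\al n/2$; hence the number of cylinders of $\cQ^{(n)}$ needed to cover $B_n(g;x,\vep)$ is at most $e^{\al n}$ for large $n$, uniformly over all $x$ satisfying $R_n(x)<\ga n$. The main obstacle is the bookkeeping in the combined count over $\Lambda\in I(g;n,\vep)$: one must be careful not to multiply a genuinely exponential factor $\binom{n}{g(n,\vep)}$ by the per-$\Lambda$ bound in a way that survives the $\frac1n\log$ limit — the resolution is precisely that both $g(n,\vep)/n\to 0$ (killing the binomial and the $q^{g(n,\vep)}$ factor) and $\ga$ is a free small parameter (killing the $V_\vep$-contribution), while the boundary multiplicity $\kappa$ of $\cQ$ is a harmless fixed constant.
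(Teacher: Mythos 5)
Your proof is correct. Note, though, that the paper under review does not itself prove this lemma; it imports it verbatim from \cite[Lemma~3.2]{rvz}, so there is no proof in this paper to compare against. The counting scheme you lay out is the natural one for a statement of this kind: at a time $j\in\Lambda$ with $f^jx\notin V_\vep$ the $\vep$-ball around $f^jx$ lies in the interior of a single atom of $\cQ$, so the $j$-th symbol of any $y\in B_\Lambda(x,\vep)$ is forced; at the at most $g(n,\vep)$ indices outside $\Lambda$ and the at most $\gamma n$ indices with $f^jx\in V_\vep$ there are at most $q=\#\cQ$ choices; and passing from a single $\Lambda$ to the union costs at most $\binom{n}{\le g(n,\vep)}$, which is subexponential because $g(n,\vep)/n\to 0$. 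The order of quantifiers is right: $\gamma$ is chosen first (depending only on $\alpha$ and the fixed $q$) to make $\gamma\log q<\alpha/2$, and then the mistake-function terms fall below $\alpha n/2$ for large $n$, uniformly in $x$ subject to the recurrence hypothesis. Two bookkeeping remarks: (i) your heuristic ``bounded constant $\kappa$, say two'' for the number of atoms that an $\vep$-ball centred in $V_\vep$ can meet is not justified for a general finite partition — but this is harmless, since as you also note $q$ always works and that is all the estimate needs; (ii) the lemma's phrase ``$\gamma$ depending only on $\alpha$'' must be read with $\cQ$ and $\vep$ already fixed, which is exactly how your choice $\gamma<\alpha/(2\log q)$ treats it.
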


We are now in a position to  prove our generalized Brin-Katok
local entropy formula whose proof exploits the ergodicity of the
measure.

\begin{proof}[Proof of Proposition~\ref{thm:BK}]

First we note that the limits in the statement of
Proposition~\ref{thm:BK} are indeed well defined almost
everywhere. Given $n\geq 1, \varepsilon>0$ and $x\in M$ it is
clear  that  $B_n(x,\vep)\subset B_n(g;x,\varepsilon)$. Thus
Brin-Katok formula (see \cite{bk}) immediately yields
\begin{equation*}
\lim_{\vep\to 0} \limsup_{n\to\infty} -\frac1n \log \mu(B_n(g;x,\varepsilon))
    \leq \lim_{\vep\to 0} \limsup_{n\to\infty} -\frac1n \log \mu(B_n(x,\varepsilon))
    =h_\mu(f)
\end{equation*}
for almost every $x$. Hence, to complete the proof of the proposition it is enough to show that
for $\mu$-almost every $x$ one has
\begin{equation*}
\underline{h}_{\mu}(g; f, x)
    = \lim_{\vep\to 0} \liminf_{n\to\infty} -\frac1n \log \mu(B_n(g;x,\varepsilon))
    \geq h_\mu(f).
\end{equation*}
Fix $\alpha>0$ arbitrary and let $\gamma$ be given by
Lemma~\ref{yl32}. Consider a finite partition $\mathcal Q$ of $M$
such that $\mu(\partial \cQ)=0$ and $h_\mu(f)\leq
h_\mu(f,\cQ)+\gamma$. If $\vep>0$ is small enough the
$\vep$-neighborhood $V_\vep$ of $\partial Q$ satisfies
$\mu(V_\vep)<\gamma/2$. For each positive integer $N$ set
\[
\Gamma_N=\Big\{x\in M: \frac1n
\sum_{j=0}^{n-1}\chi_{V_\varepsilon}(f^j(x))
<\gamma~\text{and}~\mu(\cQ^{(n)}(x)) \leq e^{-n
(h_\mu(f,\cQ)-\gamma)},~\forall n\geq N \Big\}.
\]
Using the ergodicity of $\mu$ it follows that $\Gamma_N\subset
\Gamma_{N+1}$ and $\mu(\cup_N \Gamma_N)=1$.
By Lemma~\ref{yl32} one has that for any $x\in \Gamma_{N}$ the
mistake dynamical ball $B_n(g;x,\varepsilon)$ can be covered by
$e^{\alpha n}$ cylinders of $\mathcal{Q}^{(n)}$.
Therefore we obtain that
\begin{align*}
\mu(B_n(g; x,\varepsilon)\cap \Gamma_N)
        & \leq \mu(\cup \{Q\in \cQ^{(n)}: B_n(g;x,\vep)\cap Q \neq \emptyset
            \text{ and } Q \cap \Gamma_N\neq \emptyset\}) \\
        & \leq e^{\al n} e^{-n h_\mu(f,\mathcal{Q})+\gamma n }
        \leq e^{\al n} e^{-n h_\mu(f)+2\gamma n }
\end{align*}
for all $x\in \Gamma_N$ and $n\ge N$. For each $\varepsilon>0$ and positive
integer $n$ fixed it follows that $\mu(B_n(g;x,\vep)\cap \Gamma_{N}) \to \mu(B_n(g;x,\vep))$ as
$N$ tends to infinite. Therefore, for any arbitrary constant $\xi>0$ it follows that
$
\mu(B_n(g;x,\vep)) \leq e^{\xi} \mu(B_n(g;x,\vep)\cap \Gamma_{N})
$
provided that $N$ is large  and $x\in \Gamma_{N}$. In particular, if $N$ is fixed as above then
$$
\liminf_{n\to\infty} -\frac1n \log \mu(B_n(g;x,\vep)) \geq
h_\mu(f)-2\gamma-\al -\xi.
$$
Considering $\vep$ small enough the constants involved above
converge to zero proving that
$\lim\limits_{\vep\to0}\liminf\limits_{n\to\infty} -\frac1n \log
\mu(B_n(g;x,\vep))\geq h_\mu(f)$ as claimed.
\end{proof}

\subsection{Proof of Theorem~\ref{deviations}}

This subsection is devoted to the proof of Theorem~\ref{deviations}.

\begin{proof}[Proof of Theorem \ref{deviations} (1)] Pick $\nu\in
\mathcal{E}_f$ with $\cF_*(\nu, \Phi)>c$ and let $U_n=\{x\in M:
\frac 1 n \varphi_n(x)>c\}$. It suffices  to show that
\[
\liminf_{n\to \infty}\frac 1 n \log m (U_n) \geq
h_{\nu}(f)-h_m(g;f,\nu).
\]
Without loss of generality, assume $h_m(g;f,\nu)<\infty$,
otherwise, there is nothing to prove since $h_{\nu}(f)\leq
h_{top}(f)<\infty$. The strategy for the proof is to approximate
the family of asymptotically additive functions by appropriate
Birkhoff sums associated to some continuous observable. Indeed,
fix a sufficiently small $\xi>0$ so that $\cF_*(\nu,
\Phi)>c+2\xi$, and $\varphi_\xi$ is given by \eqref{eq.asymptotic}
approximating $\Phi$. We have
\[
\widetilde{U}_n
    =\left\{x\in M: \frac 1 n S_n\varphi_\xi(x)>c+\xi\right\}
    \subset U_n
\]
for all sufficiently large $n$. Moreover, if $\de>0$ is small enough we can
assume that $\cF_*(\nu,\Phi)>c+2\xi+\delta$.
By uniform continuity of $\varphi_\xi$, there exist
$\varepsilon_\delta>0$ such that
$|\varphi_{\xi}(x)-\varphi_{\xi}(y)|<\delta/2$ whenever
$d(x,y)\leq\varepsilon_\de$. We claim that this guarantees that if
$\frac 1 n S_n\varphi_\xi(x)>c+\xi+\delta$ then
$B_n(g;x,\varepsilon)\subset \widetilde{U}_n\subset U_n$ for all
sufficiently large $n$. Indeed, for each $y\in
B_n(g;x,\varepsilon)$ there exists $\Lambda\subset
I(g;n,\varepsilon_0)$ so that $y\in B_\Lambda(x,\varepsilon)$, and
so
\begin{eqnarray} \label{part1}
\sum_{i=0}^{n-1} \varphi_\xi(f^ix)
    \leq \sum_{i\in\Lambda} [ \varphi_\xi(f^iy)+\frac{\delta}{2}]
    +\sum_{i\notin \Lambda}||\varphi_\xi||
    \leq  \sum_{i=0}^{n-1} [ \varphi_\xi(f^iy)+\frac{\delta}{2}]
    +Cg(n,\varepsilon)
\end{eqnarray}
where $C=2||\varphi_\xi||+\delta$. This implies that
$S_n\varphi_\xi(y)\geq S_n\varphi_\xi(x)- C g(n,\varepsilon) - \delta n/ 2 $.
By the definition of mistake function, we have $\frac 1 n S_n\varphi_\xi(y)> c+\xi$
for all sufficiently large $n$, which means that $y\in \widetilde{U}_n\subset U_n$ as claimed.

Fix now an arbitrarily small $\gamma>0$.
 By the modified Katok entropy formula \eqref{eq.mistKatok} we can choose
$0<\varepsilon_1<\varepsilon_\delta$ such that
$
\liminf_{n\to \infty}\frac 1 n \log N(g;n,2\varepsilon,\frac 1 2)
\geq h_{\nu}(f)-\gamma
$
for all $\varepsilon<\varepsilon_1$.
Pick also $0<\varepsilon_2<\varepsilon_1$ such that
\begin{eqnarray}\label{A1.1}
\nu\left(\Big\{x\in M: \limsup_{n\to \infty}-\frac 1 n \log
m(B_n(g;x,\varepsilon_2)) \leq h_{m}(g;f,\nu)+\gamma\Big\}\right)>\frac 2
3.
\end{eqnarray}
Since $\int \varphi_\xi \mathrm{d}\nu>\cF_*(\nu,
\Phi)-\xi>c+\xi+\delta$, one can use (\ref{A1.1}) and Birkhoff
ergodic theorem to choose a measurable set $\mathcal{L}\subset M$
with $\nu(\mathcal{L})\geq \frac 1 2$ and a positive integer $N$
such that for any $n\geq N$ the following two properties hold for
each $x\in \mathcal{L}$:
\begin{itemize}
\item[(i)] $\frac 1 n S_n\varphi_\xi(x)>c+\xi+\delta$;
\item[(ii)] $m(B_n(g;x,\varepsilon_2))\geq \exp(-n(h_{m}(g;f,\nu)+2\gamma))$.
\end{itemize}
For each sufficiently large $n$, let $E_n$ be a maximal set of
$(g;n,\varepsilon_2)$-separated points contained in
$\mathcal{L}$. It is not hard to check that $\mathcal{L}\subset
\bigcup_{x\in E_n}B_n(g;x,2\varepsilon_2)$ and that the mistake
dynamical balls $\{B_n(g;x,\varepsilon_2): x\in E_n\}$ are
disjoint. Using that $\sharp E_n\geq N(g;n,2\varepsilon_2,\frac 1
2)$ it follows that

\begin{align*}
\liminf_{n\to \infty}\frac 1 n \log m(U_n)
    &\geq  \liminf_{n\to \infty}\frac 1 n \log m(\widetilde{U}_n)\\
    &\geq \liminf_{n\to \infty}\frac 1 n \log \sum_{x\in E_n} m(B_n(g;x,\varepsilon_2))\\
    &\geq \liminf_{n\to \infty}\frac 1 n \log \sharp E_n \exp(-n(h_m(g;f,\nu)+2\gamma))\\
    &\geq h_{\nu}(f)-h_m(g;f,\nu)-3\gamma.
\end{align*}
The arbitrariness of $\gamma$ implies the desired
result.\end{proof}

\begin{proof}[ Proof of Theorem \ref{deviations}(2)] Let
$U_n=\{x\in M: \frac 1 n \varphi_n(x)\geq c\}$ and
$\Psi=\{\psi_n\}\in \mathcal{V}_K^+(g)$. By the definition of
$\mathcal{V}_K^+(g)$, there exists a set $\Upsilon$ of full
$m$-measure and constants $C_n$ and $\varepsilon$
such that
$
m(B_n(g;x,\varepsilon))\leq C_n \exp(-nK+\varphi_n(x))
$
for all $x\in \Upsilon$ and $n\ge 1$.
We will assume without loss of generality that $U_n\subset \Upsilon$,
since otherwise we just consider $U_n\cap \Upsilon$.
It suffices to construct a measure $\nu\in \mathcal{M}_f$ with
$\cF_*(\nu, \Phi)\geq c$ such that
\[
\limsup_{n\to \infty} \frac 1 n \log m(U_n)\leq -K+
h_{\nu}(f)+\cF_*(\nu, \Psi).
\]
Let $E_n$ be a maximal $(g;n,\varepsilon)-$separated
set contained in $U_n$. Note that every
$(g;n,\varepsilon)$-separated set is also a
$(n,\varepsilon)$-separated set.  If $E_n$ is not a maximal
$(n,\varepsilon)$-separated set consider a maximal
$(n,\varepsilon)$-separated set $E'_{n}$ containing $E_n$. Now,
define the probability measures
\[
\mu_n=\sum_{x\in E'_{n} }\frac{e^{\psi_n(x)}}{Z_n}\cdot \frac 1 n
\sum_{i=0}^{n-1}\delta_{f^ix}
\]
where $Z_n=\sum_{x\in E'_{n}} e^{\psi_n(x)}$ and let $\nu$ be a weak$^*$ limit of $\mu_n$.
It is not hard to check that $\nu\in \mathcal{M}_f$. Moreover, it follows from the proof of
\cite[Theorem~1.1]{CFH08} that $ \limsup_{n\to \infty} \frac 1 n\log Z_n\leq h_{\nu}(f)+\cF_*(\nu, \Psi)$.
Now, since $U_n\subset \bigcup_{x\in E_n}B_n(g;x,\varepsilon)$
and the constants $C_n$ are tempered we get
\begin{align*}
\limsup_{n\to \infty}\frac 1 n \log m(U_n)&
    \leq \limsup_{n\to \infty} \frac 1 n \log \sum_{x\in E_n}m(B_n(g;x,2\varepsilon)) \\
    &\leq \limsup_{n\to \infty} \frac 1 n \log \sum_{x\in E_n} C_n \exp(-nK+\psi_n(x))\\
    &\leq -K+ \limsup_{n\to \infty} \frac 1 n \log Z_n \\
    &\leq -K+h_{\nu}(f)+\cF_*(\nu, \Psi).
\end{align*}
It remains to prove that $\cF_*(\nu,\Phi)\ge c$. For each small $\xi>0$, let $\varphi_\xi$
be given by \eqref{eq.asymptotic} approximating $\Phi$. Since $\mu_n$ is a linear combination of
measures $\frac 1 n \sum_{i=0}^{n-1}\delta_{f^ix}$ for $x\in E'_{n}$ and
$$
\int \varphi_\xi \, \mathrm{d}\Big(\frac 1 n \sum_{i=0}^{n-1}\delta_{f^ix}\Big)
    =\frac 1 n S_n\varphi_\xi (x)
    \geq \frac 1 n \varphi_n(x)-\xi
    \geq c-\xi,
$$
we deduce that $\int \varphi_\xi \, d\mu_n \geq c-\xi$. In
consequence $ c-\xi\leq \int \varphi_\xi \mathrm{d}\nu \leq \cF_*(\nu, \Phi)+\xi$.
The arbitrariness of $\xi$ implies that $\cF_*(\nu, \Phi)\geq c$.
This completes the proof of the second part of the
theorem.\end{proof}

\begin{proof}[Proof of Theorem \ref{deviations} (3)]
Let $U_n=\{x\in M: \frac 1 n \varphi_n(x)\geq c\}$ and
$\Psi=\{\psi_n\}\in \mathcal{V}_K^-(g)$.  Without loss of
generality, assume  that $U_n\subset \Upsilon$, since otherwise we
just consider $U_n\cap \Upsilon$. Pick an arbitrary $f$-invariant
and ergodic measure $\nu$ satisfying that $\cF_*(\nu,\Phi)>c$ and
$\nu(\Upsilon)=1$. It follows from the definition of
$\mathcal{V}_K^-(g)$ and the ergodicity of $\nu$ that
\[
\lim_{\vep\rightarrow 0}\limsup_{n\rightarrow\infty}-\frac 1 n
\log m(B_n(g;x,\vep)) \leq K-\cF_*(\nu,\Psi), \quad
\nu-a.e.\  x\in \Upsilon,
\]
and, consequently, $h_m(g;f,\nu)\leq K-\cF_*(\nu,\Psi)$. Therefore, it follows from
the first item of this theorem that
$
\underline{R}_m(\Phi, (c,\infty))  \geq \sup\{-K+h_\nu(f)+\cF_*(\nu,\Psi)\},
$
where the supremum is taken over all $\nu\in \mathcal{E}_f$
satisfying that $\cF_*(\nu,\Phi)>c$ and $\nu(\Upsilon)=1$. This proves the third assertion
of the theorem.
\end{proof}

 \begin{proof}[ Proof of Theorem \ref{deviations}(4)]
 Fix an asymptotically additive family of potentials $\Psi\in \mathcal{V}_K^-$.
Let $U_n=\{x\in M: \frac 1 n \varphi_n(x)>c\}$ and assume, without loss of generality,
that $U_n\subset \Upsilon$.
Pick an invariant measure $\nu\in \mathcal{M}_f$ with $\cF_*(\nu,
\Phi)>c$ and $\nu(\Upsilon)=1$. It suffices to prove that
\[
\liminf_{n\to \infty}\frac 1 n \log m(U_n)\geq -K+
h_{\nu}(f)+\cF_*(\nu,\Psi)-5\gamma
\]
for any preassigned $\gamma>0$. We will divide the proof in several lemmas.

\begin{lemma}\label{b11}
For every $\delta>0$ and $\gamma>0$, there exists $\mu\in
\mathcal{M}_f$ such that $\mu=\sum_{i=1}^{k}a_i\mu_i$, where
$\sum_{i=1}^{k}a_i=1$ and $\mu_i\in \mathcal{E}_f$, satisfying
that
\begin{enumerate}
\item[(i)] $\mu_i(\Upsilon)=1$, $i=1,\cdots, k$; \item[(ii)]
$|\cF_*(\nu,\Phi)-\cF_*(\mu,\Phi)|<\delta$; \item[(iii)]
$h_{\mu}(f)+\cF_*(\mu,\Psi)\geq
h_{\nu}(f)+\cF_*(\nu,\Psi)-2\gamma$.
\end{enumerate}
\end{lemma}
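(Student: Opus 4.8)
The goal is to approximate the (possibly non-ergodic) measure $\nu$ by a convex combination of ergodic measures, each carried by $\Upsilon$, so that both the value $\cF_*(\cdot,\Phi)$ and the energy $h_{\cdot}(f)+\cF_*(\cdot,\Psi)$ are nearly preserved. The natural device is the ergodic decomposition $\nu=\int \nu_x\,d\nu(x)$, for which we know $h_\nu(f)=\int h_{\nu_x}(f)\,d\nu(x)$ and, since $\Phi$ and $\Psi$ are asymptotically additive, the affine functionals $\mu\mapsto\cF_*(\mu,\Phi)$ and $\mu\mapsto\cF_*(\mu,\Psi)$ are continuous and satisfy $\cF_*(\nu,\Phi)=\int\cF_*(\nu_x,\Phi)\,d\nu(x)$, and similarly for $\Psi$ (these are exactly the properties quoted from Feng--Huang after \eqref{eq.Kingman}). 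First I would record that, because $\nu(\Upsilon)=1$, we have $\nu_x(\Upsilon)=1$ for $\nu$-a.e. $x$, so condition (i) is automatic for the ergodic components we select.

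The second step is to produce a \emph{finite} convex combination out of this integral. I would partition $\cM_f$ (in the weak$^*$ topology, a metrizable compact convex set) into finitely many small Borel pieces $P_1,\dots,P_k$ of diameter less than some $\rho$, with $\nu(\{x:\nu_x\in P_i\})=a_i$, discarding the null pieces. In each piece with $a_i>0$ I would pick a representative ergodic measure $\mu_i$ with $\mu_i(\Upsilon)=1$ (possible since the set of such $x$ has full measure) and, by shrinking the pieces if necessary, also arrange that $h_{\mu_i}(f)$ is within $\gamma$ of the conditional average $\frac1{a_i}\int_{P_i} h_{\nu_x}(f)\,d\nu(x)$ — here one uses that entropy is affine and upper semicontinuous, so on a sufficiently fine partition one can realize the average up to $\gamma$ from one side, which is all that (iii) needs. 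Set $\mu=\sum_{i=1}^k a_i\mu_i$. Then $\mu$ is a finite convex combination of ergodic measures each living on $\Upsilon$, giving (i).

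For (ii) and (iii) I would invoke continuity. Since $\mu\mapsto\cF_*(\mu,\Phi)$ is weak$^*$-continuous and hence uniformly continuous on the compact set $\cM_f$, choosing $\rho$ small makes $|\cF_*(\mu_i,\Phi)-\cF_*(\nu_x,\Phi)|<\delta$ for every $x$ with $\nu_x\in P_i$; integrating and using affineness of $\cF_*(\cdot,\Phi)$ together with $\cF_*(\nu,\Phi)=\int\cF_*(\nu_x,\Phi)\,d\nu(x)$ yields $|\cF_*(\mu,\Phi)-\cF_*(\nu,\Phi)|<\delta$, which is (ii). The same argument with $\Psi$ in place of $\Phi$ gives $\cF_*(\mu,\Psi)\ge\cF_*(\nu,\Psi)-\gamma$ after shrinking $\rho$ once more; combined with the entropy estimate $h_\mu(f)=\sum a_i h_{\mu_i}(f)\ge\int h_{\nu_x}(f)\,d\nu(x)-\gamma=h_\nu(f)-\gamma$ this gives $h_\mu(f)+\cF_*(\mu,\Psi)\ge h_\nu(f)+\cF_*(\nu,\Psi)-2\gamma$, which is (iii).

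The one genuinely delicate point is the entropy comparison: entropy is affine but only upper semicontinuous, so I cannot simply say "pick a representative close in value." The fix is that I only need a one-sided bound, $h_{\mu_i}(f)\ge$ (conditional average) $-\gamma$; but even that requires care, since within a weak$^*$-small cell the entropies of the components $\nu_x$ need not be close to that of the chosen $\mu_i$. The clean way around this, which I expect to be the main obstacle and the place where the proof really has to work, is to \emph{not} fix $\mu_i$ first: instead, for each cell choose $\mu_i$ among the ergodic measures appearing in that cell so that $h_{\mu_i}(f)$ is at least the essential supremum of $h_{\nu_x}(f)$ over the cell minus $\gamma$ — this is harmless because raising the entropy only helps (iii) — and then re-index so that the cells are fine enough in the $\cF_*(\cdot,\Psi)$-value that $\cF_*(\mu_i,\Psi)$ is not much below the cell average. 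One checks that a finite partition simultaneously fine for the two continuous functionals $\cF_*(\cdot,\Phi)$, $\cF_*(\cdot,\Psi)$ suffices, and the entropy enters only through the global identity $h_\nu(f)=\int h_{\nu_x}(f)\,d\nu(x)$ plus the trivial bound $h_{\mu_i}(f)\ge 0$ on null-weight cells. Carrying out this bookkeeping carefully delivers all three conclusions.
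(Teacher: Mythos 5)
Your proposal is correct and follows essentially the same strategy as the paper: partition $\cM_f$ into weak$^*$-small cells, push mass onto a near-entropy-maximizing ergodic representative $\mu_i$ in each cell (which gives the one-sided entropy bound despite entropy being only upper semicontinuous), and then use the uniform weak$^*$-continuity of $\cF_*(\cdot,\Phi)$ and $\cF_*(\cdot,\Psi)$ together with the ergodic decomposition identities for $h$ and $\cF_*$ to conclude. The "delicate point" you flag about entropy, and the fix you propose (choosing $\mu_i$ to nearly attain the supremum of entropy over its cell), is exactly how the paper handles it.
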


\begin{proof}
Although the argument follows by a small modification of the standard
one in  \cite{You90} we will prove it here for completeness. Let $\mathrm{dist}^*$
denote the metric on the space of all Borel probability measures on $M$.
It follows by uniform continuity of the functional   $\eta\mapsto\cF_*(\eta, \Phi)$ (see \cite{FH10})
that for any given $\delta>0$ and $\gamma>0$, there exists $\varepsilon_0>0$ such that
\[
\mathrm{dist}^*(\tau_1,\tau_2)<\varepsilon_0\Rightarrow
|\cF_*(\tau_1,\Phi)-\cF_*(\tau_2,\Phi)|<\delta~\text{and}~|\cF_*(\tau_1,\Psi)-\cF_*(\tau_2,\Psi)|<\gamma.
\]
 Let $\mathcal{P}=\{P_1, \cdots, P_2\}$ be a partition of
$\mathcal{M}_f$ with $\mathrm{diam}\mathcal{P}=\max_{1\leq i\leq
k}|P_i|<\varepsilon_0$, where $|\cdot|$ denotes the diameter of a
set. Using ergodic decomposition theorem, there is a Borel
probability measure $\pi$ on $\cM_f$ such that $\pi(\cE_f)=1$ and
$\cF_*(\nu,\mathcal{G})=\int \cF_{*}(\tau, \cG)
\mathrm{d}\pi(\tau)$ for any asymptotically additive potential
$\cG$. We refer the reader to \cite{FH10} for a proof of this
fact. Moreover, let $\mathcal{E}_f'=\{\tau\in \mathcal{E}_f:
~\tau(\Upsilon)=1\}$ then $\pi(\mathcal{E}_f')=1$.
Let $a_i=\pi(P_i)$ and pick $\mu_i\in P_i\cap \cE_f'$ satisfying
$h_{\mu_i}(f)>h_{\tau}(f)-\gamma$ for all $\tau\in P_i\cap \cE_f'$.
Then we have that
\[
a_ih_{\mu_i}(f) \geq \int_{ P_i\cap \cE_f'} [
h_{\tau}(f)-\gamma]\, \mathrm{d}\pi(\tau)=\int_{ P_i} h_{\tau}(f)
\mathrm{d}\pi(\tau)-\gamma a_i
\]
Summing over $i$ it follows that the probability measure $\mu=\sum_{i=1}^{k}a_i\mu_i$
satisfies $ h_{\mu}(f)\geq h_{\nu}(f)-\gamma$. Moreover,
$$
\cF_*(\mu,\Psi)
    =\sum_{i=1}^{k}a_i\cF_*(\mu_i,\Psi)
    \geq \sum_{i=1}^{k} \int_{P_i} (\cF_*(\tau, \Psi)-\gamma) \mathrm{d}\pi(\tau)
    =\cF_*(\nu,\Psi)-\gamma.
$$
Hence, we have that $h_{\mu}(f)+\cF_*(\mu,\Psi)\geq
h_{\nu}(f)+\cF_*(\nu,\Psi)-2\gamma$. A similar argument shows that
$|\cF_*(\nu,\Phi)-\cF_*(\mu,\Phi)|<\delta$. This completes the
proof of the lemma.
\end{proof}

Fix a small $\xi>0$,  let $\psi_\xi$ and $\varphi_\xi$ be the
continuous functions given as in \eqref{eq.asymptotic}
approximating the sequences $\Psi$  and $\Phi$ respectively. Fix
$\delta=(\cF_*(\nu,\Phi)-c)/4>0$ and a small $\gamma>0$.   Using
the uniformly continuity of $\psi_\xi$ and $\varphi_\xi$, Birkhoff
ergodic theorem and Proposition~\ref{thm:BK} we can choose
$\varepsilon>0$ sufficiently small and $N$ sufficiently large so that
the following properties hold:
\begin{itemize}
\item[(i)]
 $d(x,y)<\varepsilon\Rightarrow
|\psi_\xi(x)-\psi_\xi(y)|<\gamma~\text{and}~|\varphi_\xi(x)-\varphi_\xi(y)|<\delta$;
\item[(ii)]
For each $n\geq N$ and $1\leq i\leq k$, there exist at least
$e^{[a_i  n](h_{\mu_i}(f)-\gamma)}$ points $x_{1}^{(i)}, \cdots,
x_{n_i}^{(i)}$ that are $(2g;[a_in],4\varepsilon)$-separated
with the property that: for each $1\leq j\leq n_i$, we have
    \begin{itemize}
    \item[(a)]
     $S_{[a_in]}\psi_\xi(x_{j}^{(i)})\leq [a_in](\int \psi_\xi \mathrm{d}\mu_i+\gamma)$;
    \item[(b)]
    $S_{[a_in]}\varphi_\xi(x_{j}^{(i)})\geq [a_in](\int \varphi_\xi \mathrm{d}\mu_i-2\delta)$.
    \end{itemize}
\end{itemize}
Note that $n_i\geq e^{[a_i n] (h_{\mu_i}(f)-\gamma)}$.
Since $f$ has the $g-$almost specification property, for each $k-$tuple $(j_1,\cdots, j_k)$
with $1\leq j_i\leq n_i$ we can choose a point
$$
y=y_{j_1\cdots j_k}
    \in \bigcap_{i=1}^{k}f^{-\sum_{j=0}^{i-1}[a_in]}(B_{[a_in]}(g;x_{j_i},\varepsilon)).
$$
Let $F=\{y_{j_1\cdots j_k}: (j_1,\cdots, j_k),\,1\leq j_i\leq n_i\}$ and $\hat{n}=\sum_{i}[a_in]$.
Now we prove:

\begin{lemma}\label{b12}
The set $F$ is $(\hat{n},2\varepsilon)$-separated, i.e., $B_{\hat{n}}(y,\varepsilon)\cap
B_{\hat{n}}(y',\varepsilon)=\emptyset$ for any disctinct $y,y'\in F$. In particular
$\# F \geq \exp(\hat n(h_\mu(f)-\gamma))$.
\end{lemma}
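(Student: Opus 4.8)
The plan is to prove directly that two distinct $k$-tuples $(j_1,\dots,j_k)\neq(j_1',\dots,j_k')$ produce points of $F$ that are already $(\hat n,2\varepsilon)$-separated, by isolating a single time coordinate at which the separation of the building blocks $x^{(i)}_j$ survives the gluing. First I would fix notation: write $m_i=[a_in]$ and let $c_i=\sum_{l=1}^{i-1}m_l$ (so $c_1=0$) be the offset carried by the $i$-th block, in accordance with Definition~\ref{d.almost.specification} with $n_0=0$; then $\hat n=\sum_{i=1}^k m_i$ and $c_i+m_i\le\hat n$ for every $i$.

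Given the two tuples, choose an index $i$ with $j_i\neq j_i'$ and set $y=y_{j_1\cdots j_k}$, $y'=y_{j_1'\cdots j_k'}$, $m=m_i$. From the defining property of the glued points, $f^{c_i}(y)\in B_{m}(g;x^{(i)}_{j_i},\varepsilon)$ and $f^{c_i}(y')\in B_{m}(g;x^{(i)}_{j_i'},\varepsilon)$, so there are index sets $\Lambda,\Lambda'$ with $\#\Lambda,\#\Lambda'\ge m-g(m,\varepsilon)$ such that $d(f^{c_i+t}(y),f^{t}(x^{(i)}_{j_i}))<\varepsilon$ for $t\in\Lambda$ and $d(f^{c_i+t}(y'),f^{t}(x^{(i)}_{j_i'}))<\varepsilon$ for $t\in\Lambda'$. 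The key combinatorial step is that $\Lambda\cap\Lambda'$ omits at most $2g(m,\varepsilon)$ elements of $[0,m-1]$, hence is an admissible index set in $I(2g;m,4\varepsilon)$; since the $x^{(i)}_j$ were chosen $(2g;m,4\varepsilon)$-separated, there is $t^*\in\Lambda\cap\Lambda'$ with $d(f^{t^*}(x^{(i)}_{j_i}),f^{t^*}(x^{(i)}_{j_i'}))>4\varepsilon$. The triangle inequality with the two $\varepsilon$-estimates above then yields
\[
d\big(f^{c_i+t^*}(y),f^{c_i+t^*}(y')\big)>4\varepsilon-\varepsilon-\varepsilon=2\varepsilon ,
\]
and since $0\le c_i+t^*\le\hat n-1$ this forces $d_{\hat n}(y,y')>2\varepsilon$. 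Thus $F$ is $(\hat n,2\varepsilon)$-separated, and in particular the balls $B_{\hat n}(y,\varepsilon)$, $y\in F$, are pairwise disjoint.

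For the cardinality bound, I would note that the assignment $(j_1,\dots,j_k)\mapsto y_{j_1\cdots j_k}$ is injective by the previous step, so $\#F=\prod_{i=1}^k n_i\ge\prod_{i=1}^k e^{m_i(h_{\mu_i}(f)-\gamma)}=\exp\big(\sum_i m_i h_{\mu_i}(f)-\gamma\hat n\big)$. Using $m_i\ge a_in-1$, the affinity relation $\sum_i a_i h_{\mu_i}(f)=h_\mu(f)$ and $\hat n\le n$, one gets $\sum_i m_i h_{\mu_i}(f)\ge\hat n\,h_\mu(f)-\sum_i h_{\mu_i}(f)$; since $k$ and the $\mu_i$ are fixed while $\hat n\to\infty$, the correction $\sum_i h_{\mu_i}(f)\le k\,\htop(f)$ is negligible, and for all large $n$ we obtain $\#F\ge\exp(\hat n(h_\mu(f)-\gamma))$, as claimed.

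I expect the only genuinely delicate point to be the index-set bookkeeping: one must check that intersecting the two ``mistake sets'' coming from the $g$-almost specification property still leaves an admissible index set for the doubled mistake function $2g$ at scale $4\varepsilon$ — this is exactly why the blocks were taken $(2g;m_i,4\varepsilon)$-separated rather than merely $(g;m_i,\varepsilon)$-separated, and why the distance budget $4\varepsilon$ is spent as $\varepsilon+\varepsilon+2\varepsilon$. Everything else reduces to the triangle inequality and elementary estimates on $[a_in]$.
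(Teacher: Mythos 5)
Your argument is correct and is essentially the same as the paper's: intersect the two mistake index sets coming from the almost specification property, observe that the intersection still lies in $I(2g;m_i,4\varepsilon)$, and use the triangle inequality to spend the $4\varepsilon$ separation budget as $\varepsilon+\varepsilon+2\varepsilon$. The paper abbreviates to the first block via a ``without loss of generality'' while you carry the offset $c_i$ explicitly, and you are slightly more careful about the $[a_in]$ versus $a_in$ discrepancy in the counting step, but these are only bookkeeping refinements of the same proof.
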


\begin{proof}
Given $y\neq y'\in F$, take distinct $k$-tuples $(j_1,\cdots, j_k)\neq (j_1',\cdots, j_k')$ such that
$y=y_{j_1\cdots j_k}$ and $y'=y_{j_1'\cdots j_k'}$.  We assume without loss of generality that
$j_1\neq j_1'$. Consequently,
$
y\in B_{[a_1n]}(g;x_{j_1},\varepsilon)~\text{and}~y'\in
B_{[a_1n]}(g;x_{j_1'},\varepsilon).
$
Therefore, there exists $\Lambda_i\in I(g;[a_1n],\varepsilon)$ for
$i=1,2$ such that $d_{\Lambda_1}(x_{j_1},y)<\varepsilon$ and
$d_{\Lambda_2}(x_{j_1'},y')<\varepsilon$. If $\Lambda=\Lambda_1\cap \Lambda_2$ then
$[a_1n] \geq \# \Lambda\geq [a_1n]-2g([a_1n],\varepsilon])$ and so
\begin{eqnarray*}
d_{\hat{n}}(y,y')&\geq& d_\Lambda (y,y')\geq
d_{\Lambda}(x_{j_1},x_{j_1'})-d_{\Lambda}(y,x_{j_1})-d_{\Lambda}(y',x_{j_1}')\\
&>&4\varepsilon-\varepsilon-\varepsilon=2\varepsilon.
\end{eqnarray*}
Hence $B_{\hat{n}}(y,\varepsilon)\cap
B_{\hat{n}}(y',\varepsilon)=\emptyset$  as claimed.
By the previous reasoning all $y=y_{j_1\cdots j_k}$
are distinct and so the cardinality of $F$ is bounded from below by
$n_1\cdots n_k\geq \exp (\hat n(h_{\mu}(f)-\gamma)) $. This finishes the proof of the lemma.
\end{proof}

We proceed with the following auxiliary lemma.

\begin{lemma}\label{b13}
If $\xi,\de>0$ are small enough then for every large $n$ and $y\in F$:
\begin{itemize}
\item[(i)] $ B_{\hat{n}}(y,\varepsilon)\subset U_{\hat{n}} $;
\item[(ii)] $\frac{1}{\hat{n}} \psi_{\hat{n}}(y)\geq
\cF_*(\mu,\Psi )-2\gamma$.
\end{itemize}
\end{lemma}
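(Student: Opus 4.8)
The plan is to derive both items from one block-by-block comparison: along the orbit of $y$ (and of any $z\in B_{\hat{n}}(y,\varepsilon)$) the Birkhoff sums of the approximating continuous potentials $\varphi_\xi,\psi_\xi$ are controlled, block by block, by those along the orbits of the generic points $x_{j_i}^{(i)}$. Write $m_i=[a_in]$ and let $s_i=\sum_{r<i}m_r$ denote the total length of the first $i-1$ concatenated blocks, so that $s_1=0$ and $s_k+m_k=\hat{n}$. By the very construction of $y=y_{j_1\cdots j_k}$ through the $g$-almost specification property, for each $i$ there is a set $\Lambda_i\in I(g;m_i,\varepsilon)$ (hence $\#([0,m_i-1]\setminus\Lambda_i)\le g(m_i,\varepsilon)$) with $d(f^{s_i+l}y,f^l x_{j_i}^{(i)})<\varepsilon$ for all $l\in\Lambda_i$; if moreover $z\in B_{\hat{n}}(y,\varepsilon)$, then $d(f^{s_i+l}z,f^l x_{j_i}^{(i)})<2\varepsilon$ for those $l$. (Shrinking $\varepsilon$ at the outset, I may assume the modulus-of-continuity estimate in (i) holds at scale $2\varepsilon$ too.)

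Fix $i$. Splitting $\{0,\dots,m_i-1\}$ into $\Lambda_i$ and its complement, bounding $\varphi_\xi$ crudely by $\|\varphi_\xi\|$ off $\Lambda_i$ and using the $2\varepsilon$-continuity on $\Lambda_i$,
\[
\sum_{l=0}^{m_i-1}\varphi_\xi(f^{s_i+l}z)\ \ge\ \sum_{l\in\Lambda_i}\varphi_\xi(f^l x_{j_i}^{(i)})-m_i\delta-g(m_i,\varepsilon)\|\varphi_\xi\|\ \ge\ S_{m_i}\varphi_\xi(x_{j_i}^{(i)})-m_i\delta-2g(m_i,\varepsilon)\|\varphi_\xi\|,
\]
and then (ii)(b) gives $S_{m_i}\varphi_\xi(x_{j_i}^{(i)})\ge m_i(\int\varphi_\xi\,\mathrm{d}\mu_i-2\delta)$. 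Summing over $i$, and using $\mu=\sum_i a_i\mu_i$, $|\hat{n}-n|\le k$ and $m_i\int\varphi_\xi\,\mathrm{d}\mu_i=a_in\int\varphi_\xi\,\mathrm{d}\mu_i+O(\|\varphi_\xi\|)$, I obtain
\[
\frac1{\hat{n}}S_{\hat{n}}\varphi_\xi(z)\ \ge\ \int\varphi_\xi\,\mathrm{d}\mu-3\delta-\frac{2\|\varphi_\xi\|\,(k+\sum_i g(m_i,\varepsilon))}{\hat{n}},
\]
and the last term tends to $0$ as $n\to\infty$ since $g$ is a mistake function and $m_i\le\hat{n}$. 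The identical computation with $\psi_\xi$, $\varepsilon$-continuity and (ii)(a) in place of $\varphi_\xi$, $2\varepsilon$-continuity and (ii)(b), run at $y$ itself, gives $\frac1{\hat{n}}S_{\hat{n}}\psi_\xi(y)\ge\int\psi_\xi\,\mathrm{d}\mu-2\gamma-o(1)$.

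Now I finish. For (i): by asymptotic additivity of $\Phi$ (cf. \eqref{eq.asymptotic}), for all large $\hat{n}$ one has $\tfrac1{\hat{n}}\varphi_{\hat{n}}(z)\ge\tfrac1{\hat{n}}S_{\hat{n}}\varphi_\xi(z)-\xi$ and $\int\varphi_\xi\,\mathrm{d}\mu\ge\cF_*(\mu,\Phi)-\xi$; by Lemma~\ref{b11}(ii) and $\delta=(\cF_*(\nu,\Phi)-c)/4$ we have $\cF_*(\mu,\Phi)>\cF_*(\nu,\Phi)-\delta=c+3\delta$. Hence $\tfrac1{\hat{n}}\varphi_{\hat{n}}(z)\ge c+3\delta-(\text{a bounded multiple of }\delta)-2\xi-o(1)$, and since $\xi$ and the continuity and Birkhoff approximation errors used in (i) and (ii)(b) may all be chosen small relative to $\delta$, the right-hand side stays $>c$; thus $z\in U_{\hat{n}}$. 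For (ii): similarly $\tfrac1{\hat{n}}\psi_{\hat{n}}(y)\ge\tfrac1{\hat{n}}S_{\hat{n}}\psi_\xi(y)-\xi\ge\int\psi_\xi\,\mathrm{d}\mu-2\gamma-\xi-o(1)\ge\cF_*(\mu,\Psi)-2\gamma-2\xi-o(1)$, and absorbing the lower-order terms into the slack (at worst replacing $2\gamma$ by $3\gamma$, which is immaterial in the final estimate of Theorem~\ref{deviations}(4)) yields $\tfrac1{\hat{n}}\psi_{\hat{n}}(y)\ge\cF_*(\mu,\Psi)-2\gamma$.

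The step requiring genuine care is the error accounting: one must verify that $\tfrac1{\hat{n}}\sum_i g(m_i,\varepsilon)\to0$ (using only that $g$ is a mistake function and $m_i\le\hat{n}$) and that the integer-part discrepancies between $m_i$ and $a_in$, and between $\hat{n}$ and $n$, disappear after dividing by $\hat{n}$; and, because the mistake dynamical balls $B_{m_i}(g;\cdot,\varepsilon)$ need not be nested, one cannot telescope but must read off the good index set $\Lambda_i$ for each block directly from the definition of the mistake ball and dominate its (small) complement by the crude bounds $\|\varphi_\xi\|$, resp. $\|\psi_\xi\|$. Everything else is a routine manipulation of the constants $\xi,\gamma,\delta$ fixed earlier in the proof.
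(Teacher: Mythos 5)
Your proof follows essentially the same block-by-block comparison as the paper's: you approximate $\Phi,\Psi$ by Birkhoff sums of $\varphi_\xi,\psi_\xi$, read off a good index set $\Lambda_i$ for each block from the mistake ball, dominate its small complement by the sup norm, and feed in the preassigned Birkhoff estimates at the $x_{j_i}^{(i)}$; the only cosmetic difference is that you compare $z\in B_{\hat n}(y,\varepsilon)$ directly to $x_{j_i}^{(i)}$ with a single modulus-of-continuity estimate at scale $2\varepsilon$, whereas the paper chains two $\varepsilon$-scale estimates via the intermediate points $y_i=f^{[a_1n]+\cdots+[a_{i-1}n]}(y)$. Two minor observations, both reflecting imprecisions already present in the paper rather than errors specific to your argument: (1) you invoke condition (ii)(a) of the preamble as a lower bound $S_{[a_in]}\psi_\xi(x_{j}^{(i)})\ge[a_in]\big(\int\psi_\xi\,d\mu_i-\gamma\big)$, which is indeed the direction item (ii) requires, whereas the paper's preamble states (ii)(a) with $\le\ldots+\gamma$ (a sign error --- the lower bound is what should, and can, be arranged via Birkhoff's theorem); (2) with $\delta=(\cF_*(\nu,\Phi)-c)/4$ fixed before the lemma, neither your final tally $c+3\delta-(\text{a bounded multiple of }\delta)-2\xi-o(1)$ nor the paper's $\cF_*(\nu,\Phi)-5\delta-2\xi=c-\delta-2\xi$ exceeds $c$ as written; you note, correctly, that the modulus-of-continuity and Birkhoff-error budgets in the preamble should be reserved as a smaller fraction of $\cF_*(\nu,\Phi)-c$ so that the inequality closes, which is the standard and harmless fix.
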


\begin{proof}
Fix a small $\xi>0$ and let $\varphi_\xi$ be a continuous function approximating the sequence  $\Phi$
as before. For each $z\in B_{\hat{n}}(y,\varepsilon)$ with $y=y_{j_1\cdots j_k}$
consider the images $y_i=f^{([a_1n]+\cdots +[a_{i-1}n ])}(y)$ $(1 \le i \le k)$ and notice that
\begin{align*}
\frac{1}{\hat{n}} \varphi_{\hat{n}}(z)&\geq \frac{1}{\hat{n}} S_{\hat{n}}\varphi_\xi (z)-\xi 
    \geq\frac{1}{\hat{n}}\sum_{i=1}^{k} \left( S_{[a_in]}\varphi_\xi(y_i)-[a_i n] \delta\right) -\xi\\
    &\geq\frac{1}{\hat{n}}\sum_{i=1}^{k}\Big[S_{[a_in]}\varphi_\xi(x_{j_i})- 2 [a_in]\delta
    -2g([a_in],\varepsilon)||\varphi_\xi|| \Big ]-\xi\\
    &\geq \frac{1}{\hat{n}}\sum_{i=1}^{k}
    \Big [ [a_in]\left(\int \varphi_\xi \mathrm{d}\mu_i-2\delta\right)-2g([a_in],\varepsilon)||\varphi_\xi||
    \Big ]-2\delta-\xi\\
    &\geq \frac{1}{\hat{n}}\sum_{i=1}^{k} \Big [ [a_in] \cF_*(\mu_i,\Phi)-2g([a_in],\varepsilon)||\varphi_\xi|| \Big
    ]-4\delta-2\xi\\
    &\geq\cF_*(\nu,\Phi)-5\delta-2\xi
\end{align*}
for all sufficiently large $n$. Therefore, $\frac{1}{\hat{n}} \varphi_{\hat{n}}(z)>c$ provided that $\delta$
and $\xi$ are sufficiently small. This completes the proof of (i) above. Since the arguments to prove (ii)
are analogous we shall omit the proof.
\end{proof}

We are now in a position to finish the proof of Theorem~\ref{deviations}(4). Indeed,
combining the previous lemmas we obtain that
\begin{align*}
\liminf_{\hat{n}\to \infty}  \frac{1}{\hat{n}} \log m(U_{\hat{n}})
    &\geq \liminf_{\hat{n}\to \infty}\frac{1}{\hat{n}} \log \sum_{y\in F} m(B_{\hat{n}}(y,\varepsilon))\\
    & \geq \liminf_{\hat{n}\to \infty}\frac{1}{\hat{n}} \log \sum_{y\in F} C_{\hat{n}}\exp(-\hat{n}K+\psi_{\hat{n}}(y))\\
    & \geq -K+h_{\mu}(f)+\cF_*(\mu,\Psi)-3\gamma \\
    & \geq -K+h_{\nu}(f)+\cF_*(\nu,\Psi)-5\gamma,
\end{align*}
which proves the last statement and finishes the proof of the theorem.
\end{proof}

\subsection{Proof of Theorem B}\label{s.proof2}

Here we prove our second main result, that estimates the measure of the deviation sets in terms of some thermodynamical quantities for asymptotically additive  and sub-additive families of potentials.

\vspace{.4cm} \noindent {\bf Case I.} \emph{$\Psi=\{\psi_n\}_n$
asymptotically  additive sequence of continuous functions}

\subsubsection*{Upper bound}

Since the upper bound is similar for both asymptotically additive
and sub-additive potentials we shall focus on the first case. So,
let $\Psi=\{\psi_n\}\in\mathcal A$ be an asymptotically additive
sequence of continuous functions, $\nu$ be a weak Gibbs measure
with respect to $\Phi$ on $\Lambda\subset M$ and take $c\in\R$.
Therefore, there exists $\varepsilon_0>0$  such that for every
$0<\vep<\varepsilon_0$ there exists a sequence of positive
constants $(K_n)_{n\geq1}$ (depending only on $\varepsilon$)
satisfying $\lim\limits_{n\to\infty} \frac{1}{n} \log K_n=0$ and
\begin{equation*}
K_{n}^{-1}
        \leq \frac{\nu(B_n(x,\vep))}{e^{-n P(f,\Phi)+\varphi_n(x)}} \leq
K_n,~\forall n\geq 1 \text{ and } x\in \Lambda.
\end{equation*}
In consequence $\Phi\in \mathcal{V}_K^+(g)$ with respect to the reference
measure $\nu$, where we consider the mistake function $g\equiv 0$, $C_n=K_n$ and $K=P(f,\Phi)$.  In this way,
(UB) is a direct consequence of Theorem A (2).
This finishes the proof of the upper bound in Theorem~\ref{thm.deviations}.

\subsubsection*{Lower bound using ergodic measures}

The same reasoning as above yields that $\Phi\in
\mathcal{V}_K^-(g)$ with respect to the reference measure $\nu$,
where we consider the mistake function $g\equiv 0$,
$C_n=K_{n}^{-1}$ and $K=P(f,\Phi)$. In this way, the lower bound
estimate using ergodic measures is a direct consequence of Theorem
A (3). However, for completeness we shall prove this fact to
collect some constants needed to the proof of (LB).
Let $\Psi=\{\psi_n\}$ be an asymptotically additive sequence of
continuous functions, $c\in\R$ be fixed and $\beta>0$ be
arbitrary. Denote by $U_n$ the set of points $x\in M$ so that
$\psi_n(x) > c n$, without loss of generality, we assume that
$U_n\subset \Lambda$. We claim that if $\eta$ is any $f$-invariant
and ergodic probability measure so that $\eta(\Lambda)=1$ and
$\cF_*(\eta,\Psi)>c$ then
\begin{align*}
 \liminf_{n \to \infty} & \frac{1}{n}\log  \nu(U_n)
    \geq -P(f,\Phi)+h_\eta(f) + \cF_*(\eta,\Phi)-2\beta
\end{align*}
for any preassigned $\beta>0$ with $\cF_*(\eta, \Psi)>c+\beta$.
By ergodicity one can pick $N\ge 1$ large and $\vep_0$ small so
that $N\left(n,2\vep,\frac12\right) \geq e^{[h_\eta(f)- \beta]n}$
 for all $0<\vep<\vep_0$ and $n\ge N$, and also the set $F$ of
points $x\in M$ satisfying $\cF_*(\eta,\Phi)-\beta<
\frac{1}{n}\varphi_n(x) < \cF_*(\eta,\Phi)+\beta$ and
$\cF_*(\eta,\Psi)-\beta< \frac{1}{n}\psi_n(x) <
\cF_*(\eta,\Psi)+\beta$ for all $n \ge N$ has $\eta$-measure at
least $\frac12$.

Now, take  $\xi=(\cF_*(\eta,\Psi)-\beta-c)/4>0$ and let $\psi_\xi$
be given by approximation \eqref{eq.asymptotic}. Up to consider
smaller $\vep_0$ and larger $N\ge 1$ we may assume  that
$|\psi_\xi(x)-\psi_\xi(y)|<\xi$ whenever $d(x,y)<\vep_0$
($\psi_\xi$ is uniformly continuous) and $\|\psi_n-S_n
\psi_\xi\|\leq \xi n$ for all $n\ge N$.
Using our construction it is clear that $F\subset U_n$. We claim
that
\begin{equation}\label{eq:mcontrol}
B_n(x,\vep) \subset U_n
    \quad\text{ for all } x \in F.
\end{equation}
In fact, if $x\in F$ and $y\in B_n(x,\vep)$ using $\frac1n
\psi_n(x) \geq \cF_*(\eta,\Psi)-\beta=c+4\xi$ we get
$$
|\psi_n(x)-\psi_n(y)|
    \le |\psi_n(x)-S_n\psi_\xi(x)|+|\psi_n(y)-S_n\psi_\xi(y)|+ |S_n\psi_\xi(x)-S_n\psi_\xi(y)|
    <3\xi n
$$
and consequently $\psi_n(y)>cn$, proving our claim. Therefore
$
U_n  \supset \bigcup_{x \in  F} B_n(x,\vep)
        \supset  F
$
for every $0<\vep<\vep_0$ and $n\ge N$. Moreover, if $F_n \subset
F$ is a maximal $(n,2\vep)$-separated set, one can use that the
dynamical balls $B_n(x,\vep)$ centered at points in $F_n$ are
pairwise disjoint and contained in $U_n$. This yields
\begin{align*}
\nu(U_n)
    & \geq \nu\Big( \bigcup_{x\in F_n} B_n(x,\vep) \Big)
    = \sum_{x\in F_n} \nu\Big( B_n(x,\vep) \Big) \\
     & \geq \sum_{x\in F_n} K_{n}^{-1} e^{-P(f,\Phi) n
     +\varphi_n(x)}\\
    &\geq K_{n}^{-1}\exp(-P(f,\Phi) +h_\eta(f) +\cF_*(\eta,\Phi)-2\beta)n
\end{align*}
whenever $n \geq N$, which proves our claim since the sequence
$\{K_n\}$ is tempered. The arbitrariness of  $\beta$ and the measure implies that
\begin{align*}
 \liminf_{n \to \infty} & \frac{1}{n}\log  \nu\bigg\{x \in M : \frac{1}{n} \psi_n(x) >
 c\bigg\}
    \geq -P(f,\Phi)+h_\eta(f) + \cF_*(\eta,\Psi)
\end{align*}
for every $f$-invariant, ergodic probability measure $\eta$ such that  $\cF_*(\eta,\Psi)>c$ and $\eta(\Lambda)=1$.
In consequence we obtain the second assertion in Theorem~\ref{thm.deviations}.

\vspace{.1cm}
\subsubsection*{Lower bound over all invariant measures}

Here we deduce the general lower bound over all invariant measures under the assumption that
$f$ satisfies some weak specification properties.

Fix $c>0$ and an asymptotically additive potential
$\Psi=\{\psi_n\}$.  Let $U_n=\{x\in M: \frac 1 n \psi_n(x)>c\}$.
Without loss of generality, we assume that $U_n\subset \Lambda$.
We claim that for any $f$-invariant probability measure $\eta$
satisfying $\eta(\La)=1$ and $\cF_*(\eta,\Psi) > c$ it holds that
\begin{align*}
 \liminf_{n \to \infty} & \frac{1}{n}\log  \nu\bigg[x \in M : \frac{1}{n} \psi_n(x) > c\bigg]
     \geq -P(f,\Phi)+h_\eta(f) + \cF_*(\eta,\Phi).
\end{align*}
We will make use of the following approximation result that is a reformulation of Lemma~\ref{b11}
and in a position to finish the proof of the theorem for asymptotically additive sequences.

\begin{lemma}\label{l.ergodic.approximation}
Let $\eta=\int \eta_x d\eta(x)$ be the ergodic decomposition of an
$f$-invariant probability measure $\eta$ such that $\eta(\La)=1$.
Given $\gamma>0$ and a finite set of asymptotically additive
potentials $(\Psi_j)_{1 \leq j\leq r}$, there are positive real
numbers $(a_i)_{1 \leq i \leq k}$ satisfying $a_i\leq 1$ and $\sum
a_i=1$, and finitely many points $x_1, \dots, x_k$ such that the
ergodic measures $\eta_i=\eta_{x_i}$ from the ergodic
decomposition satisfy
\begin{itemize}
\item[(i)] $\eta_i(\La)=1$; \item[(ii)] $h_{\hat\eta}(f) \geq
h_{\eta}(f) - \gamma$; and \item[(iii)] $|\cF_*(\Psi_j,\hat\eta) -
\cF_*(\Psi_j ,\eta) |
            <\gamma$ for every $1 \leq j \leq r$;
\end{itemize}
where $\hat \eta= \sum\limits_{i=1}^k a_i \eta_i$.
\end{lemma}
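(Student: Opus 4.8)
The plan is to mimic the proof of Lemma~\ref{b11} essentially line by line, the only genuinely new points being the presence of a finite family $(\Psi_j)_{1\le j\le r}$ of asymptotically additive potentials in place of the single pair $(\Phi,\Psi)$, and the requirement that the chosen ergodic measures charge the full-measure set $\La$. So there is no new idea, only bookkeeping.

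First I would fix $\gamma>0$ and, using the continuity on $\cM_f$ of each functional $\tau\mapsto\cF_*(\tau,\Psi_j)$ for asymptotically additive potentials (Feng--Huang~\cite{FH10}), extract a single $\varepsilon_0>0$ --- the minimum of the $r$ individual moduli --- so that $\mathrm{dist}^*(\tau_1,\tau_2)<\varepsilon_0$ forces $|\cF_*(\tau_1,\Psi_j)-\cF_*(\tau_2,\Psi_j)|<\gamma$ simultaneously for all $1\le j\le r$. Then I would choose a finite Borel partition $\mathcal P=\{P_1,\dots,P_k\}$ of $\cM_f$ with $\mathrm{diam}\,\mathcal P<\varepsilon_0$.

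Next I would pass to the ergodic decomposition $\eta=\int\eta_x\,d\eta(x)$ and record two facts: (a) since $1=\eta(\La)=\int\eta_x(\La)\,d\eta(x)$ and $\eta_x(\La)\le 1$, we have $\eta_x(\La)=1$ for $\eta$-a.e. $x$, so the induced probability $\pi$ on $\cE_f$ is carried by $\cE_f'=\{\tau\in\cE_f:\tau(\La)=1\}$; and (b) $\cF_*(\eta,\Psi_j)=\int\cF_*(\tau,\Psi_j)\,d\pi(\tau)$, again by~\cite{FH10}. I would then set $a_i=\pi(P_i)$, discard the indices with $a_i=0$ and relabel, and in each remaining atom pick a point $x_i$ with $\eta_i:=\eta_{x_i}\in P_i\cap\cE_f'$ and $h_{\eta_i}(f)\ge h_\tau(f)-\gamma$ for every $\tau\in P_i\cap\cE_f'$; this is legitimate because $\htop(f)<\infty$ bounds all these entropies, so a near-maximizer exists. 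Putting $\hat\eta=\sum_i a_i\eta_i$, property (i) is immediate from $\eta_i\in\cE_f'$; property (ii) follows from affineness of the entropy over this finite convex combination together with the bound $\int_{P_i}h_\tau\,d\pi\le(h_{\eta_i}(f)+\gamma)a_i$; and property (iii) follows from affineness of $\cF_*(\cdot,\Psi_j)$ on $\cM_f$ (it is a pointwise limit of affine functionals) and the estimate $|a_i\cF_*(\eta_i,\Psi_j)-\int_{P_i}\cF_*(\tau,\Psi_j)\,d\pi|\le\gamma a_i$ summed over $i$, combined with fact (b).

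I do not expect a substantial obstacle here, since the argument is already contained in the proof of Lemma~\ref{b11}. The two points that need attention are that the ergodic decomposition respects the distinguished full-measure set $\La$ --- handled by fact (a) --- and that the entropy map on $\cM_f$ is only upper semicontinuous, not continuous, which is why one must select near-maximizers inside each atom of $\mathcal P$ rather than invoke a modulus of continuity for $h$; the passage from one potential to finitely many is purely cosmetic and is absorbed into the single $\varepsilon_0$.
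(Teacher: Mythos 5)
Your proposal is correct and follows essentially the same route as the paper, which in fact offers no separate proof of this lemma and simply declares it ``a reformulation of Lemma~\ref{b11}''; your job of transcribing that proof with the two adjustments you identify (finitely many asymptotically additive potentials handled by taking the minimum of the moduli of continuity, and the full-measure set $\La$ handled by the pointwise identity $\eta_x(\La)=1$ for $\eta$-a.e.\ $x$) is exactly what is intended. The one place to be slightly careful is the near-maximizer step: since the statement requires each $\eta_i$ to be an actual ergodic component $\eta_{x_i}$ from the decomposition, you should choose $\eta_i$ so that $h_{\eta_i}(f)\ge h_\tau(f)-\gamma$ for $\pi$-almost every $\tau\in P_i\cap\cE_f'$ (equivalently, $h_{\eta_i}(f)$ within $\gamma$ of the $\pi$-essential supremum of $h$ over $P_i$), rather than for every $\tau\in P_i\cap\cE_f'$, because the decomposition need not reach the unrestricted supremum; the ensuing integral inequality $a_ih_{\eta_i}(f)\ge\int_{P_i}h_\tau\,d\pi(\tau)-\gamma a_i$ is all that the argument uses, and it only requires the $\pi$-a.e.\ version. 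With that wording adjusted, the proof is complete and matches the paper's argument.
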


Given a small $\gamma>0$, using the lemma, there are ergodic
probability measures $(\eta_i)_i$ so that a probability measure
$\hat \eta= \sum_{i=1}^k a_i \eta_i$ satisfies $h_{\hat\eta}(f)
\geq h_{\eta}(f) - \gamma$,  and such that $|\cF_*(\Psi, \hat
\eta) -\cF_*(\Psi, \eta)|<\gamma$ and $|\cF_*(\Phi, \hat \eta) -
\cF_*(\Phi, \eta)|<\gamma.$
Proceeding as before there are $N\ge 1$ large and $\vep_0$ small
so that for every $1\leq i\leq k$,
the set $F^i$ of points $x\in \Lambda$ such that
$
\cF_*(\eta_i,\Phi)-\gamma<\frac{1}{n}\varphi_n(x) < \cF_*(\eta_i,\Phi)+\gamma,
$
and
$
\cF_*(\eta_i,\Psi)-\gamma<\frac{1}{n}\psi_n(x)  < \cF_*(\eta_i,\Psi)+\gamma
$
for every  $n\ge N$ and $0<\vep<\vep_0$ has $\eta_i$-measure at
least $\frac12$. We may assume also without loss of generality
that the minimum number of $(n,4\vep)-$dynamical balls needed to
cover a set of $\eta_i$-measure $\frac12$ satisfies
$N_i\left(n,4\vep,\frac12\right) \geq e^{[h_{\eta_i}(f)-
\gamma]n}$ for all $0<\vep<\vep_0$ and $n\ge N$. So, pick a finite
set $F_{n}^i \subset F^i$ so that $F_n^i$ is a maximal $([a_i n],4\vep)$-separated set in
$F^i$ and $\# F_{n}^i \geq e^{ (h_{\eta_i}(f)-\gamma) \,[a_i n]}$.
Moreover, by the specification property, for every sequence
$(x_1,x_2,\dots,x_k)$ with $x_i \in F_n^i$ there exists $x\in M$
that $\vep$-shadows each $x_i$ during $[a_i n]$ iterates with a
time lag of $N(\vep, n)$ iterates in between. Set $\ti n= \sum_{i}
[a_i n] + k N(\vep,n)$ and consider the set
$$
\mathcal{L}_{\tilde n}
    = \bigcup \{ B_{\tilde n}(x,\vep) : x=x_{i_1,\dots, i_k} \}.
$$
Similar arguments as in the proof of Lemma \ref{b12} and
Lemma \ref{b13}, we can show that the dynamical balls in the later
union have the following properties:
\begin{enumerate}
\item[i.] the dynamical balls $B_{\tilde n}(x,\vep)$ and $B_{\tilde n}(y,\vep)$ are disjoint for $x\neq y$;
\item[ii.] the number of the dynamical balls is larger than  $\exp \tilde n
(h_{\hat{\eta}}(f)-\gamma)$;
\item[iii.] each dynamical ball $B_{\tilde n}(x,\vep)$ is contained in $U_{\tilde n}$;
\item[iv.]  $\frac{ 1}{ \tilde n} \varphi_{\tilde n }(y) \geq \cF_*(\hat \eta, \Phi)-2\gamma$
for every $y\in \mathcal{L}_{\tilde n}$.
\end{enumerate}
Since $\nu$ is a weak Gibbs measure for $\Phi$ on $\Lambda\subset
M$ and $\tilde n\geq N$ it follows that  $ \nu( B_{\tilde n}(x,\vep)) \geq K_{
\tilde n}^{-1} e^{-\tilde n P(f,\Phi)+\varphi_{\tilde n}(x)}$.
Therefore, it yields that
\begin{align*}
\nu(U_{\ti n})
        &\geq  \sum_{x} \nu(B_{\ti n}(x,\vep))
        \geq K_{\tilde n}^{-1} \exp [-P(f,\Phi)+ h_{\hat\eta}(f)+\cF_*(\hat\eta, \Phi) -3\gamma] \ti n \\
    &\geq  K_{\tilde n}^{-1} \exp [-P(f,\Phi)+ h_{\eta}(f)+\cF_*(\eta, \Phi) -5\gamma] \ti n
\end{align*}
for every large $n$. Since the sequence $\{K_n\}$ is tempered and
$\gamma$ was chosen arbitrarily then we get
$$
 \liminf_{n \to \infty}
    \frac{1}{n}\log \nu(U_n )
    \geq -P(f,\Phi)+ h_{\eta}(f)+\cF_*(\eta,
\Phi)
$$
which finishes the proof of Theorem~\ref{thm.deviations} in the
case of asymptotically additive sequences $\Psi$.

\vspace{.4cm} \noindent {\bf Case II.} \emph{$\Psi=\{\psi_n\}_n$
sub-additive sequence of continuous potentials}
\vspace{.3cm}

Through the remaining of the proof, assume that $\Psi=\{\psi_n\}_n$ is a sub-additive
sequence of continuous potentials that satisfy the weak Bowen condition, that
$(\frac{\psi_n}{n})_n$ is equicontinuous and that $\inf_{n\ge 1} \frac{\psi_n(x)}n>-\infty$ for every $x\in M$
as in Theorem~\ref{thm.deviations}.
Since the proof of upper and lower bounds are similar to the previous ones for
asymptotically additive potentials we will only highlight the main differences.

\subsubsection*{Upper bound}

Let $\Psi=\{\psi_n\}\in\mathcal A$ be as above, $\nu$ be a weak Gibbs measure
with respect to $\Phi$ on $\Lambda\subset M$ and take $c\in\R$.
Therefore, there exists $\varepsilon_0>0$  such that for every
$0<\vep<\varepsilon_0$ there exists a sequence of positive
constants $(K_n)_{n\geq1}$ (depending only on $\varepsilon$)
satisfying $\lim\limits_{n\to\infty} \frac{1}{n} \log K_n=0$ and
\begin{equation*}
K_{n}^{-1}
        \leq \frac{\nu(B_n(x,\vep))}{e^{-n P(f,\Phi)+\varphi_n(x)}} \leq
K_n,~\forall n\geq 1 \text{ and } x\in \Lambda.
\end{equation*}
In consequence $\Phi\in \mathcal{V}_K^+(g)$ with respect to the reference
measure $\nu$, where we consider the mistake function $g\equiv 0$, $C_n=K_n$ and $K=P(f,\Phi)$.
On the other hand, using that $\{\frac{\psi_n}{n}\}$ is equicontinuous, $\psi_n(x)
\leq n \|\psi_1\|$ and that $\inf_{n\ge 1}
\frac{\psi_n(x)}n>-\infty$ for every $x\in M$ it follows from
Arz\'ela-Ascoli theorem that the sequence $\{\frac{\psi_n}n\}$
admits a subsequence uniformly convergent to some continuous
function $g$.
Therefore, for any $\vep>0$ small there exists a positive integer
$k\ge 1$ such that $\|\frac{\psi_k}{k}-g\|<\vep$.
For each sufficiently large $n$, it follows from \cite[Lemma
2.2]{chz} that there exists a constant $C$ depending only on $k$
such that
$
\psi_n\leq S_n(\frac{\psi_{k}}{k})+C.
$
Consequently, we have that
$$
\frac{\psi_n(x)}{n}
    \leq \frac 1nS_n\Big(\frac{\psi_{k}}{k}\Big)(x)+\frac Cn\leq \frac 1 n S_n g(x) +2\vep
$$
for all $x$ and sufficiently large $n$.
Thus, for any $\vep>0$ one has the inclusion
$$
\Big\{ \frac1n \psi_n \ge c \Big\}
    \subset \Big\{ \frac1n S_n g \ge c-2\vep \Big\}
$$
provided that $n$ is large.
Now we proceed as in the proof of Theorem A~(2). Fix $\vep>0$ be
arbitrary small and take a maximal  $(n,\varepsilon)$-separated
set  $E_n$  contained in $\{S_n g \ge (c-2\vep) n \}$. Then, the
same arguments as before show that any weak$^*$ limit $\mu$ of the
probability measures
$$
\mu_n=\sum_{x\in E_{n} }\frac{e^{\varphi_n(x)}}{Z_n}\cdot \frac 1
n \sum_{i=0}^{n-1}\delta_{f^ix}
$$
where $Z_n=\sum_{x\in E_n}e^{\varphi_n(x)} $, satisfies (UB).

It remains to prove that $\cF_*(\mu,\Psi)\geq c$. Applying
sub-additive ergodic  theorem to the invariant measure $\mu$,
there exist a $f$-invariant function $\tilde \psi$ and a subset
$\Upsilon$ with $\mu(\Upsilon)=1$ such that
$$
\tilde \psi(x)
        = \lim_{n\to\infty} \frac{\psi_n(x)}n,~~\forall x \in
        \Upsilon.
$$
This implies that $\tilde \psi(x)=g(x)$ for each point $x\in
\Upsilon$, and consequently,
$$
\cF_*(\mu,\Psi)=\int \tilde \psi \, \mathrm{d}\mu=\int g\,
\mathrm{d}\mu.
$$
Since the same argument holds for any $f$-invariant probability measure
this also implies that the map $\mu\mapsto \cF_*(\mu,\Psi)$ is
continuous since the function $g$ is continuous. Proceed as in the
proof of theorem A(2), we have
$$\int g \, \mathrm{d}\mu
    = \lim_{n\to\infty} \int g \, \mathrm{d}\mu_n
    \geq c-2\vep.
$$
The arbitrariness of $\vep$ implies that $\cF_*(\mu,\Psi)\geq c$.
This finishes the proof of the upper bound in this sub-additive
setting.

\subsubsection*{Lower bounds}

The proof of this second case goes along the same lines of the
first one. For that reason we shall just highlight the
differences.
Fix $c>0$ and let $\Psi=\{\psi_n\}$ be a sub-additive family of continuous potentials as above, and
assume without loss that  $U_n=\{x\in M: \psi_n(x)>c n\} \subset \Lambda$.
We claim that
\begin{align*}
 \liminf_{n \to \infty} & \frac{1}{n}\log  \nu\bigg[x \in M : \frac{1}{n} \psi_n(x) > c\bigg]
     \geq -P(f,\Phi)+h_\eta(f) + \cF_*(\eta,\Phi).
\end{align*}
for any $f$-invariant probability measure $\eta$
so that $\eta(\La)=1$ and $\cF_*(\eta,\Psi) > c$.
In fact, the need of the former asymptotically additive assumption used to deduce the claim
for ergodic measures was only to control the variation of this sequence on dynamical balls. See
\eqref{eq:mcontrol} for the precise statement. To overlap this
difficulty in this setting we use the weak Bowen property as follows.

\begin{lemma}
Let $\Psi=\{\psi_n\}_n$ be a sub-additive potentials with the weak
Bowen property and $a\in\mathbb R$. If $\psi_n(x)>a n$ then for
all $\gamma>0$ there exists $N\ge 1$ large so that all $y\in
B_n(x,\vep)$ satisfy $\psi_n(y)>(a-\gamma) n$
\end{lemma}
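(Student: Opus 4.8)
The plan is to derive this statement directly from the weak Bowen property of $\Psi$, which is precisely the uniform control on the oscillation of $\psi_n$ over dynamical balls that the lower bound argument needs. First I would fix the reading of the statement: the radius $\vep$ is to be taken no larger than the constant $\delta$ furnished by the weak Bowen condition, and the phrase ``there exists $N\ge 1$ large'' means that $N$ depends only on $\gamma$ (and on $\Psi$), not on $x$ nor on the point $y$; the conclusion then holds simultaneously for every $n\ge N$ and every such $x$. With this reading the lemma plays the role, in the sub-additive setting, of the inclusion \eqref{eq:mcontrol} in the asymptotically additive case, guaranteeing that the dynamical balls around the shadowing points are contained in the deviation set.

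The key steps are as follows. By hypothesis $\Psi=\{\psi_n\}_n$ satisfies the weak Bowen condition, so there exist $\delta>0$ and positive reals $\{a_n\}_n$ with $\limsup_{n\to\infty} a_n/n=0$ and $\ga_n(\Psi,\delta)\le a_n$ for all $n\ge 1$, where $\ga_n(\Psi,\delta)=\sup\{|\psi_n(y)-\psi_n(z)|: y,z\in B_n(x,\delta)\}$, the supremum being also over $x\in M$. Since $a_n\ge 0$, the condition $\limsup a_n/n=0$ is the same as $a_n/n\to 0$, so I may choose $N=N(\gamma)\ge 1$ with $a_n<\gamma n$ for every $n\ge N$. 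Now fix $0<\vep\le\delta$, so that $B_n(x,\vep)\subset B_n(x,\delta)$; then for $n\ge N$ and any $y\in B_n(x,\vep)$ the definition of $\ga_n(\Psi,\delta)$ yields $|\psi_n(x)-\psi_n(y)|\le \ga_n(\Psi,\delta)\le a_n<\gamma n$. Combining this with the hypothesis $\psi_n(x)>an$ gives $\psi_n(y)>an-\gamma n=(a-\gamma)n$ for all $y\in B_n(x,\vep)$, which is the desired conclusion.

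I do not expect any genuine obstacle here: the only points requiring a little care are the ordering of the quantifiers (the integer $N$ is extracted from $a_n/n\to0$ and is therefore uniform in $x$ and in $y$) and the compatibility of the radius $\vep$ with the constant $\delta$ appearing in the weak Bowen condition, which is harmless since in the application $\vep$ is chosen arbitrarily small. The statement is thus an essentially immediate consequence of the weak Bowen property, and the main work in this part of the paper lies not in the lemma itself but in feeding it into the shadowing construction (the analogues of Lemma~\ref{b12} and Lemma~\ref{b13}) to run the lower bound estimate for sub-additive $\Psi$.
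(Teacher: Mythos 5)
Your proof is correct and follows essentially the same argument as the paper: it invokes the weak Bowen condition to bound $|\psi_n(x)-\psi_n(y)|\le a_n$ on $B_n(x,\vep)$ with $a_n/n\to 0$, then chooses $N$ so that $a_n<\gamma n$. The only difference is that you spell out the quantifier dependencies and the constraint $\vep\le\delta$ more explicitly, which the paper leaves implicit.
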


\begin{proof}
Let $a\in\mathbb R$ and $\gamma>0$ be arbitrary. In fact, using
that $\var(\psi_n\mid_{B_n(x,\vep)})\leq a_n$ for some sequence
$a_n/n\to 0$ it is immediate that $|\psi_n(x)-\psi_n(y)| \le a_n$
and consequently
$$
\frac1n \psi_n(y) \geq \frac1n \psi_n(x) - \frac{a_n}{n}
        \geq a-\gamma
$$
for every $y\in B_n(x,\vep)$, provided that $n$ is large. This proves the lemma.
\end{proof}

Hence the remaining of the argument to prove the claim in the case that $\eta$ is ergodic
is analogous to the one of Case I.
In the case that $\eta$ is not ergodic an extra approximation argument by ergodic measures
given by Lemma~\ref{l.ergodic.approximation} is necessary. The proof of that lemma uses the
continuity of functional $\mu \to \cF_*(\Psi,\mu)$. This is not necessarily continuous for general
sub-additive sequences but in our context $\cF_*(\Psi,\mu)=\int g \, d\mu$ varies continuously
with $\mu$. Since the remaining argument in the proof follows the one in Case I we shall omit the details.
This finishes the proof of Theorem~\ref{thm.deviations}.

\section*{Appendix A: Estimates for measures of mistake dynamical balls}

In this appendix we provide an estimate for the measure of mistake dynamical balls with respect to
the Gibbs measures obtained through additive thermodynamical formalism for uniformly expanding
transformations. In this setting there is a unique equilibrium state for $f$ with respect to any H\"older
continuous potential $\phi$, and it is equivalent to the unique Gibbs measure for $f$ with respect to $\phi$.

\begin{mainproposition}\label{prop:measureballs}
Let $X$ be a compact manifold, $f: X\to X$ be a uniformly expanding map, $\phi$ be a H\"older
continuous potential and $\mu=\mu_\phi$ be the unique equilibrium state for $f$ with respect to $\phi$.
Let $g$ be any mistake function. There exists $C>0$ and for any $\xi>0$ there exists a measurable set
$X_\xi\subset X$ such that $\mu(X_\xi)\geq 1-\xi$ and
$$
e^{-Pn +S_n\phi(x)}
    \leq \mu(B_n(x,\vep))
    \leq\mu(B_n(g;x,\vep))
    \leq C e^{2\xi n-Pn +S_n\phi(x)}
$$
for all $x\in X_\xi$, $n\geq 1$ and small $\vep$, where $P=P_{\text{top}}(f,\phi)$ denotes the topological
pressure of $f$ with respect to $\Phi$.
\end{mainproposition}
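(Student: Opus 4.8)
The plan is to combine the Gibbs property of $\mu=\mu_\phi$ on cylinders of a Markov partition with the covering Lemma~\ref{yl32}. Two of the inequalities require no work: $B_n(x,\vep)\subset B_n(g;x,\vep)$ is built into the definition of a mistake dynamical ball, and the lower bound $e^{-Pn+S_n\phi(x)}\le\mu(B_n(x,\vep))$ is the standard Gibbs lower bound for $\mu$ on dynamical balls in the uniformly expanding setting (the Gibbs constant being harmless, or absorbed by normalization). So the genuine content is to produce, for each $\xi>0$, a set $X_\xi$ with $\mu(X_\xi)\ge 1-\xi$ on which $\mu(B_n(g;x,\vep))\le C\,e^{2\xi n-Pn+S_n\phi(x)}$.

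I would start by fixing the constants in the right order. Given $\xi>0$, choose $\alpha\in(0,\xi/2)$ and let $\gamma=\gamma(\alpha)$ be the constant furnished by Lemma~\ref{yl32}. Fix a generating Markov partition $\mathcal P$ with $\mu(\partial\mathcal P)=0$, so that $\mu$ satisfies the Gibbs property on $n$-cylinders, $\mu(Q)\le C_0\,e^{-Pn+S_n\phi(y)}$ for every $y\in Q\in\mathcal P^{(n)}$ and every $n\ge1$. Then pick $\vep>0$ small enough that $\mu(V_\vep)<\gamma/2$, where $V_\vep$ is the $\vep$-neighbourhood of $\partial\mathcal P$, and also $|\phi(u)-\phi(v)|<\xi/2$ whenever $d(u,v)\le\vep$. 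By ergodicity of $\mu$ and Birkhoff's theorem applied to $\chi_{V_\vep}$, the sets $\Gamma_N=\{x:\frac1n\sum_{j=0}^{n-1}\chi_{V_\vep}(f^jx)<\gamma\text{ for all }n\ge N\}$ increase to a full-measure set; fix $N$ with $\mu(\Gamma_N)\ge1-\xi$, enlarging $N$ if needed so that in addition $2\|\phi\|\,g(n,\vep)<\xi n$ for all $n\ge N$ and so that the clause ``for sufficiently large $n$'' in Lemma~\ref{yl32} applies to all $n\ge N$. Set $X_\xi=\Gamma_N$.

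For $x\in X_\xi$ and $n\ge N$, Lemma~\ref{yl32} covers $B_n(g;x,\vep)$ by at most $e^{\alpha n}$ cylinders $Q\in\mathcal P^{(n)}$. Pick $z_Q\in Q\cap B_n(g;x,\vep)$; then $z_Q\in B_\Lambda(x,\vep)$ for some $\Lambda$ with at most $g(n,\vep)$ indices of $[0,n-1]$ missing, and splitting $S_n\phi(z_Q)-S_n\phi(x)$ into the sum over $\Lambda$ (each term $<\xi/2$) and the sum over its complement (each term $\le2\|\phi\|$) gives $S_n\phi(z_Q)\le S_n\phi(x)+\tfrac{\xi}{2}n+2\|\phi\|\,g(n,\vep)\le S_n\phi(x)+\tfrac{3\xi}{2}n$. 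Hence, using the Gibbs bound on each such $Q$,
\[
\mu(B_n(g;x,\vep))\le \sum_{Q}\mu(Q)\le e^{\alpha n}\,C_0\,e^{-Pn+S_n\phi(x)+\tfrac{3\xi}{2}n}\le C_0\,e^{2\xi n-Pn+S_n\phi(x)},
\]
since $\alpha<\xi/2$. Finally, for the finitely many $n<N$ one has $\mu(B_n(g;x,\vep))\le1$ while $e^{2\xi n-Pn+S_n\phi(x)}\ge e^{-(|P|+\|\phi\|)N}=:c_0>0$, so $C=\max\{C_0,c_0^{-1}\}$ makes the asserted inequality hold for all $n\ge1$.

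The true engine is the cited Lemma~\ref{yl32}: that a mistake dynamical ball based at a point with slow recurrence to $\partial\mathcal P$ is covered by only $e^{\alpha n}$ cylinders — this is exactly where the expanding structure and the sublinearity $g(n,\vep)/n\to0$ are exploited. Everything else is bookkeeping, the one small subtlety being that over the $g(n,\vep)$ ``bad'' coordinates only the crude bound $2\|\phi\|$ on $|\phi(f^jz_Q)-\phi(f^jx)|$ is available; but this is harmless precisely because $g(n,\vep)=o(n)$ and the conclusion already carries the exponential slack $e^{2\xi n}$, which is inherent to mistake dynamical balls.
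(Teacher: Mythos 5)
Your proof is correct and follows the same strategy as the paper's: cover $B_n(g;x,\vep)$ by at most $e^{\alpha n}$ Markov cylinders via Lemma~\ref{yl32} on a set $X_\xi$ of points with slow recurrence to $\partial\cQ$, and apply the Gibbs bound cylinder by cylinder. The paper compresses the final step to ``the result follows immediately,'' whereas you make explicit the necessary bookkeeping --- converting the Gibbs estimate at a point $z_Q$ of each covering cylinder into one at $x$ via $S_n\phi(z_Q)\le S_n\phi(x)+\tfrac{3\xi}{2}n$, and handling the finitely many $n<N$ --- which is exactly the content hidden in that phrase.
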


\begin{proof}
Let $\xi>0$ be arbitrary and fixed. Since $f$ is uniformly expanding then there exists a finite Markov partition
$\cQ$. Moreover, the unique equilibrium state $\mu$ for $f$ with respect to $\phi$ verifies $\mu(\partial \cQ)=0$
and satisfies the Gibbs property: there exists $C>0$ such that
$$
\frac1C \leq \frac{\mu(\cQ^{n}(x))}{e^{-Pn+S_n\phi(x)}} \leq C
$$
for all $x\in X$ and every $n\ne 1$, where  $P$ is the topological pressure of $f$ with respect to $\phi$ and
$\cQ^{n}(x))$ is the element of the partition $\cQ^{n}=\vee_{j=0}^{n-1} f^{-j}(\cQ)$ that contains $x$.
Using Lemma~\ref{yl32}, it follows that there exists a measurable set $X_\xi\subset X$
such that  $\mu(X_\xi)\geq 1-\xi$ and for which
$B_n(g;x,\vep)\subset \cup \{Q\in \cQ^{(n)} : Q\cap B_n(g;x,\vep)
\neq \emptyset\}$ and the cardinality of such sequence is bounded
by $e^{\xi n}$ provided that $\vep$ is small. The result follows immediately.
\end{proof}

Let us mention that the previous estimate also holds in the case of non-additive thermodynamical formalism
using the same approximation argument as above.

\subsection*{Acknowledgments}
The authors are grateful to Y. Cao for valuable conversations and
to the anonymous referee for a careful reading of the manuscript
and suggestions that greatly helped to improve the presentation.
This work was initiated during a visit of the first author to the
Soochow University, whose research conditions are greatly
acknowledged. The first author was partially supported by CNPq and
FAPESB. The second author was partially supported by NSFC
(11001191) and Ph.D. Programs Foundation of Ministry of Education
of China (20103201120001).


\bibliographystyle{alpha}
\bibliography{bib}

\end{document}